\newtheorem{theorem}{Theorem}[section]
\newtheorem{proposition}[theorem]{Proposition}
\newtheorem{lemma}[theorem]{Lemma}
\newtheorem{corollary}[theorem]{Corollary}
\theoremstyle{definition}
\newtheorem{definition}[theorem]{Definition}
\begin{document}

\date{July 29, 2010}

\title[Ergodicity and mixing]{Ergodicity and mixing of W*-dynamical systems in terms of joinings}

\author{Rocco Duvenhage}

\address{Department of Mathematics and Applied Mathematics (Current address: Department of Physics)\\
University of Pretoria\\
Pretoria 0002\\
South Africa}

\email{rocco.duvenhage@up.ac.za}

\subjclass[2000]{46L55}

\begin{abstract}
We study characterizations of ergodicity, weak mixing and strong
mixing of W*-dynamical systems in terms of joinings and subsystems
of such systems. Ergodic joinings and Ornstein's criterion for
strong mixing are also discussed in this context.
\end{abstract}

\maketitle

\section{Introduction}

In \cite{D} we studied joinings of W*-dynamical systems, and in
particular gave a characterization of ergodicity in terms of
joinings, similar to the measure theoretic case. In this paper we
continue to extend certain results regarding joinings of measure
theoretic dynamical systems to the noncommutative setting of
W*-dynamical systems. First we generalize the necessary condition
for ergodicity to arbitrary group actions, and also prove a similar
set of sufficient and necessary conditions for weak mixing in terms
of ergodic compact systems and discrete spectra (see Section 2).
Section 3 is devoted to an interesting (and known) class of examples
of W*-dynamical systems obtained from group von Neumann algebras of
discrete groups and their automorphisms, however we express our
results in the language of locally compact quantum groups. Next we
study ergodic joinings in Section 4. In Sections 2 and 4 we also
consider simple applications for the case where the group action is
that of a countable discrete amenable group, namely a weak ergodic
theorem and a Halmos-von Neumann type theorem respectively. In the
latter we make the rather strong assumption of asymptotic
abelianness ``in density''. The focus in this paper is on building
some general aspects of the theory of joinings of W*-dynamical
systems, and these applications are more for illustration of how
joinings can potentially be used rather than being important results
in themselves. In Section 5 we present a joining characterization of
strong mixing (for the special case where the acting group is
$\mathbb{Z}$), and use it to obtain a version of Ornstein's
criterion for strong mixing in the case of W*-dynamical systems.
Sections 2 and 3 differ from Sections 4 and 5 in the sense that in
the former subsystems of W*-dynamical systems play a central role
while in the latter they do not. At the same time Sections 4 and 5
just take initial steps in the respective topics, while the topics
in Sections 2 and 3 are more fully developed. Along the way we give
indications of further work that might be done.

We use the same basic definitions as in \cite{D}, and will again
refer to a W*-dynamical system simply as a ``dynamical system'', or
even just a ``system''. For convenience we summarize the essential
definitions used in \cite{D}: A \textit{dynamical system}
$\mathbf{A}=\left(A,\mu,\alpha\right)$ consists of a faithful normal
state $\mu$ on a $\sigma$-finite von Neumann algebra $A$, and a
representation $\alpha:G\rightarrow$ Aut$(A):g\mapsto \alpha_{g}$ of
an arbitrary group $G$ as $\ast$-automorphisms of $A$, such that
$\mu\circ\alpha_{g}=\mu$ for all $g$. We will call $\mathbf{A}$ an
\textit{identity system} if $\alpha_{g}=\iota_{A}$ for all $g$ where
$\iota_{A}:A\rightarrow A$ is the identity mapping, while we call it
\textit{trivial} if $A=\mathbb{C}1_{A}$ where $1_{A}$ (often denoted
simply as $1$) is the unit of $A$. In the rest of the paper the
symbols $\mathbf{A}$, $\mathbf{B}$ and $\mathbf{F}$ will denote
dynamical systems $\left(A,\mu,\alpha\right)$,
$\left(B,\nu,\beta\right)$ and $\left( F,\kappa,\varphi\right)$
respectively,\ all making use of actions of the same group $G$. A
\textit{joining} of $\mathbf{A}$ and $\mathbf{B}$ is a state
$\omega$ (i.e. a positive linear functional with $\omega (1)=1$) on
the algebraic tensor product $A\odot B$ such that $\omega\left(
a\otimes1_{B}\right)  =\mu(a)$, $\omega\left(1_{A}\otimes
b\right)=\nu(b)$ and
$\omega\circ\left(\alpha_{g}\odot\beta_{g}\right) =\omega$ for all
$a\in A$, $b\in B$ and $g\in G$. The set of all joinings of
$\mathbf{A}$ and $\mathbf{B}$ is denoted by $J\left(
\mathbf{A},\mathbf{B}\right)  $. We call $\mathbf{A}$
\textit{disjoint} from $\mathbf{B}$ when
$J\left(\mathbf{A},\mathbf{B}\right) =\left\{\mu\odot\nu\right\}$. A
dynamical system $\mathbf{A}$ is called \textit{ergodic} if its
\textit{fixed point algebra} $A_{\alpha}:=\left\{a\in
A:\alpha_{g}(a)=a\text{ for all }g\in G\right\}$ is trivial, i.e.
$A_{\alpha}=\mathbb{C}1_{A}$. We call $\mathbf{F}$ a
\textit{subsystem} of $\mathbf{A}$ if there exists an injective
unital $\ast$-homomorphism $h$ of $F$ onto a von Neumann subalgebra
of $A$ such that $\mu\circ h=\kappa$ and $\alpha_{g}\circ
h=h\circ\varphi_{g}$ for all $g\in G$. (In \cite{D} the terminology
``factor" instead of ``subsystem" was used.) If furthermore
$h:F\rightarrow A$ is surjective, then we say that $h$ is an
\textit{isomorphism} of dynamical systems, and the systems
$\mathbf{A}$ and $\mathbf{F}$ are \textit{isomorphic}.

Unlike \cite{D}, in this paper we will have occasion to use completions of the
algebraic tensor product. Even though $A$ and $B$ are von Neumann algebras, we
will encounter the maximal C*-algebraic tensor product $A\otimes_{m}B$ in
Sections 2, 4 and 5. In Section 3 we do use the von Neumann algebraic tensor
product, however in this case it is to handle locally compact quantum groups
and not directly related to joinings.

The work in this paper is of course strongly influenced by previous work on
joinings in measure theoretic ergodic theory which originates in Furstenberg's
work \cite{F67}. In this regard we mention that \cite{dlR} and \cite{G}, as
well as unpublished lecture notes by A. del Junco, served as very useful sources.

For example the joining obtained in \cite[Construction 3.4]{D}, and
which we will again use here, can be viewed (ignoring dynamics) as a
generalization of a diagonal measure $\triangle(Y\times
Z):=\rho(Y\cap Z)$ defined in terms of some measure $\rho$ on a
measurable space $X$ and where $Y,Z\subset X$. A noncommutative
version of a diagonal measure using essentially the same idea as our
construction of a joining appeared in \cite[Section 4]{Fid}.

Also keep in mind that the use of joinings in noncommutative dynamical systems
is not without precedent, as a special case of this idea (under the name
``stationary couplings'') is used in work on entropy \cite{ST}.

\section{Ergodicity and weak mixing}

We start by improving on the characterization of ergodicity given in
\cite{D}. In particular we prove a stronger version of \cite[Theorem
3.7]{D} using a simpler proof. We do this by using an approach given
in unpublished lecture notes by A. del Junco for the measure
theoretic case.

\begin{theorem}
A dynamical system $\mathbf{A}$ is ergodic if and only if it is
disjoint from all identity systems.
\end{theorem}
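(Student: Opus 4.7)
The plan is to handle the two implications separately: for the forward direction I would convert an arbitrary joining between $\mathbf{A}$ and an identity system into a bounded operator on the GNS space of $\mu$ that ergodicity forces to be a scalar; for the converse I would build a nontrivial joining from a nontrivial element of the fixed point algebra $A_\alpha$ using the diagonal joining construction of \cite[Construction 3.4]{D}.

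For the forward direction, let $(H_\mu,\pi_\mu,\Omega_\mu)$ be the GNS of $(A,\mu)$, $J$ the modular conjugation, and $U_g$ the unitaries on $H_\mu$ implementing $\alpha_g$; then $U_g\Omega_\mu=\Omega_\mu$ and $U_gJ=JU_g$, the latter because $\mu$ is $\alpha$-invariant. Given $\omega\in J(\mathbf{A},\mathbf{F})$ with $\mathbf{F}=(F,\kappa,\iota_F)$ an identity system and $b\in F$, I would introduce the sesquilinear form
\[
q_b(a_1\Omega_\mu,a_2\Omega_\mu):=\omega(a_1^{*}a_2\otimes b)
\]
on the dense subspace $\pi_\mu(A)\Omega_\mu$. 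Cauchy--Schwarz for the state $\omega$ applied to $(a_1\otimes 1)^{*}(a_2\otimes b)$, together with the positivity of $a_2^{*}a_2\otimes(\|b\|^{2}1-b^{*}b)$ in $A\odot F$, gives $|q_b(\xi,\eta)|\leq\|b\|\,\|\xi\|\,\|\eta\|$, so $q_b$ extends to a bounded operator $T_b\in B(H_\mu)$. A short calculation from the definition places $T_b\in\pi_\mu(A)'=J\pi_\mu(A)J$, and invariance $\omega\circ(\alpha_g\odot\iota_F)=\omega$ (where the identity-system hypothesis enters) yields $U_gT_b=T_bU_g$. The key step is the identification
\[
\pi_\mu(A)'\cap\{U_g:g\in G\}'=J\pi_\mu(A_\alpha)J,
\]
which follows by writing an element of $\pi_\mu(A)'$ as $J\pi_\mu(a)J$ and using $U_gJ=JU_g$: commutation with $U_g$ is equivalent to $\alpha_g(a)=a$. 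Ergodicity then forces $T_b\in\mathbb{C}\cdot I$, and pairing with $\Omega_\mu$ identifies the scalar as $\omega(1\otimes b)=\kappa(b)$. Hence $\omega(a_1^{*}a_2\otimes b)=\kappa(b)\mu(a_1^{*}a_2)$, so $\omega=\mu\odot\kappa$.

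For the converse I would argue by contrapositive. Suppose $A_\alpha\neq\mathbb{C}1$ and pick a self-adjoint $c\in A_\alpha\setminus\mathbb{C}1$. The abelian von Neumann subalgebra $F$ of $A$ generated by $c$ and $1$ lies in $A_\alpha$, so $\alpha_g|_F=\iota_F$, and with $\kappa:=\mu|_F$ the triple $\mathbf{F}:=(F,\kappa,\iota_F)$ is an identity system realized as a nontrivial subsystem of $\mathbf{A}$ via the inclusion $F\hookrightarrow A$. Applying the diagonal joining of \cite[Construction 3.4]{D} to this subsystem produces a joining in $J(\mathbf{A},\mathbf{F})$ different from $\mu\odot\kappa$, so $\mathbf{A}$ is not disjoint from $\mathbf{F}$.

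The main obstacle I expect is the structural identification $\pi_\mu(A)'\cap\{U_g\}'=J\pi_\mu(A_\alpha)J$: it depends on the compatibility $U_gJ=JU_g$ arising from $\mu$-invariance and on faithfulness of $\pi_\mu$, which holds because $\mu$ is faithful and $A$ is $\sigma$-finite. Once this is in place, the remaining details---extending $q_b$ to $H_\mu$, verifying $T_b\in\pi_\mu(A)'$, reducing to general $b$ by linearity, and passing from the scalar formula for $\omega(a_1^{*}a_2\otimes b)$ to $\omega=\mu\odot\kappa$---are routine.
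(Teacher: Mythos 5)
Your argument is correct, and the converse coincides with the paper's (the paper simply cites \cite{D}, Theorem 3.3, which builds the diagonal joining of \cite{D}, Construction 3.4, on a nontrivial subsystem sitting inside the fixed point algebra, exactly as you do), but your forward direction takes a genuinely different route. The paper extracts from $\omega$ the conditional expectation operator $P_{\omega}\colon H_{\mu}\rightarrow H_{\nu}$ of \cite{D}, Construction 2.3 and Proposition 2.4, uses the intertwining relation $U_{g}P_{\omega}^{\ast}=P_{\omega}^{\ast}V_{g}$ together with $V_{g}=1$ to see that each $P_{\omega}^{\ast}\gamma_{\nu}(b)$ is $U$-fixed, and then invokes \cite{BR}, Theorem 4.3.20, so that ergodicity forces the fixed-point space of $U$ to be $\mathbb{C}\Omega_{\omega}$. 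You instead encode $\omega(\cdot\otimes b)$ as an operator $T_{b}\in\pi_{\mu}(A)'\cap\{U_{g}:g\in G\}'$ and identify this relative commutant with $J\pi_{\mu}(A_{\alpha})J$ via $U_{g}J=JU_{g}$, so ergodicity kills $T_{b}$ directly at the level of the algebraic definition $A_{\alpha}=\mathbb{C}1$. Your boundedness estimate is sound (Cauchy--Schwarz for $\omega$ plus positivity of $a^{\ast}a\otimes(\|b\|^{2}1-b^{\ast}b)$ as a sum of squares in $A\odot F$), and the commutation computations go through. What the paper's route buys is economy: $P_{\omega}$ is already available from \cite{D} and is reused verbatim in the proof of Theorem 2.8. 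What your route buys is independence from \cite{BR}, Theorem 4.3.20 (the passage between the algebraic and spatial formulations of ergodicity), at the price of Tomita--Takesaki theory, which is in any case available since $\mu$ is faithful and normal and $\Omega_{\mu}$ is therefore separating. One small point to tidy: \cite{D}, Construction 3.4, actually yields a joining of $\mathbf{A}$ with the mirror image $\tilde{\mathbf{F}}$ of your subsystem rather than with $\mathbf{F}$ itself; since you chose $F$ abelian and contained in $A_{\alpha}$, the mirror image is again a nontrivial identity system, so the conclusion that $\mathbf{A}$ fails to be disjoint from some identity system stands, but this should be said explicitly.
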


\begin{proof}
Suppose $\mathbf{A}$ is ergodic, and let $\mathbf{B}$ be any
identity system. Consider any $\omega\in J(\mathbf{A},\mathbf{B})$.
From this joining we obtain (see \cite[Construction 2.3 and
Proposition 2.4]{D}) a conditional expectation operator
$P_{\omega}:H_{\mu }\rightarrow H_{\nu}$ (i.e. $\left\langle
P_{\omega}x,y\right\rangle =\left\langle x,y\right\rangle $) such
that $U_{g}P_{\omega}^{\ast}=P_{\omega}^{\ast}V_{g}$, where
$\gamma_{\mu}:A\rightarrow H_{\mu}$ and $\gamma_{\nu }:B\rightarrow
H_{\nu}$ are the GNS constructions for $\left(A,\mu\right)$ and
$\left(  B,\nu\right)  $ respectively, $U$ and $V$ the corresponding
unitary representations of $\alpha$ and $\beta$ on the Hilbert
spaces
 $H_{\mu}$ and $H_{\nu}$
respectively, and we denote by $\Omega_{\omega}$ their common unit cyclic
vector (in the GNS Hilbert space obtained from $\omega$, which contains
$H_{\mu}$ and $H_{\nu}$). Therefore for any $b\in B$ we have $U_{g}P_{\omega
}^{\ast}\gamma_{\nu}(b)=P_{\omega}^{\ast}\gamma_{\nu}(b)$, since $\mathbf{B}$
is an identity system. But $\mathbf{A}$ is ergodic, hence by \cite[Theorem
4.3.20]{BR} the fixed point space of $U$ is $\mathbb{C}\Omega_{\omega}$, so
$P_{\omega}^{\ast}\gamma_{\nu}(b)=\left\langle \Omega_{\omega},P_{\omega
}^{\ast}\gamma_{\nu}(b)\right\rangle \Omega_{\omega}=\nu(b)\Omega_{\omega}$.
For any $a\in A$ it follows that
\[
\omega\left(  a\otimes b\right)  =\left\langle
\gamma_{\mu}(a^{\ast}),\gamma_{\nu}(b)\right\rangle =\left\langle
\gamma_{\mu}(a^{\ast}),P_{\omega}^{\ast}\gamma_{\nu}(b)\right\rangle
=\mu(a)\nu(b)
\]
hence $\omega=\mu\odot\nu$, which means that $\mathbf{A}$ is
disjoint from $\mathbf{B}$. The converse was proven in \cite[Theorem
3.3]{D} using a subsystem of $\mathbf{A}$.
\end{proof}

Before we move on to weak mixing, we give a simple application of Theorem 2.1,
namely we prove a weak ergodic theorem. The result itself is not that
interesting, but we do this to illustrate how joinings can in principle be
used to prove results that don't refer to joinings in their formulation (see
in particular Corollary 2.4). Again we follow the basic plan for the measure
theoretic case given in the unpublished lecture notes by del Junco.

\begin{definition}
For a dynamical system $\mathbf{A}$, consider the cyclic
representation $\left(H,\pi,\Omega\right)$ of $\left(A,\mu\right)$
obtained by the GNS construction. Set $\tilde{A}:=\pi (A)^{\prime}$,
define the state $\tilde{\mu}$ on $\tilde{A}$ by
$\tilde{\mu}(b):=\left\langle \Omega,b\Omega\right\rangle $, and let
the unital $\ast $-homomorphism $\delta:A\odot\tilde{A}\rightarrow
B(H)$ be defined by $\delta\left( a\otimes b\right) :=\pi(a)b$. The
state $\mu_{\bigtriangleup}$ on the unital $\ast$-algebra
$A\odot\tilde{A}$ defined by $\mu_{\bigtriangleup}(t):=\left\langle
\Omega,\delta(t)\Omega\right\rangle $ will be called the
\textit{diagonal state} for $\left(A,\mu\right)$.
\end{definition}

The state $\mu_{\bigtriangleup}$ is in fact a joining of
$\mathbf{A}$ and its ``mirror image'' $\mathbf{\tilde{A}}$
constructed on $\left(\tilde{A},\tilde {\mu}\right)$ defined above
by carrying $\alpha$ to $\tilde{A}$ using the natural
$\ast$-anti-isomorphism $a\mapsto Ja^{\ast}J$ where $J$ is the
modular conjugation associated with $\left(\pi(A),\Omega\right)$
(see \cite[Construction 3.4]{D}). But it is not this aspect of
$\mu_{\bigtriangleup}$ that will be used in the next proposition
(see Section 5 for further elaboration on the joining aspect).

\begin{proposition}
Let $\mathbf{A}$ be ergodic, with $G$ countable, discrete and
amenable, and consider any right F\o lner sequence
$\left(\Lambda_{n}\right) $ in $G$. We can extend the diagonal state
for $\left( A,\mu\right)  $ to a state $\mu_{\bigtriangleup}$ on the
maximal C*-algebraic tensor product $A\otimes_{m}\tilde{A}$, and
then
\[
\operatorname{w*-lim}_{n\rightarrow\infty}\frac{1}{\left|
\Lambda_{n}\right|}
\sum_{g\in\Lambda_{n}}\mu_{\bigtriangleup}\circ\left(\alpha_{g}\otimes
_{m}\iota_{\tilde{A}}\right) =\mu\otimes_{m}\tilde{\mu}
\]
where $\operatorname{w*-lim}$ denotes the weak* limit and
$\iota_{\tilde{A}}$ is the identity mapping on $\tilde{A}$.
\end{proposition}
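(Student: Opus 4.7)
The plan is to first extend $\mu_{\bigtriangleup}$ from $A\odot\tilde{A}$ to the maximal C*-tensor product $A\otimes_{m}\tilde{A}$, then check the averaged convergence on elementary tensors by reducing it to a standard mean ergodic theorem, and finally promote pointwise convergence on the algebraic tensor product to weak-* convergence by a uniform boundedness argument.

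For the extension, I would observe that $\pi\colon A\to B(H)$ and the inclusion $\iota_{\tilde{A}}\colon\tilde{A}\hookrightarrow B(H)$ are $*$-homomorphisms with \emph{commuting ranges}, because $\tilde{A}=\pi(A)'$ by definition. The universal property of the maximal C*-tensor product then provides a unique $*$-homomorphism $A\otimes_{m}\tilde{A}\to B(H)$ extending $\delta$, and setting $\mu_{\bigtriangleup}(t):=\langle\Omega,\delta(t)\Omega\rangle$ gives the required state extension. Crucially, it is $\otimes_{m}$ (rather than the minimal or spatial tensor product) that allows this extension in general.

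Next, on an elementary tensor $a\otimes b$, using the covariance $\pi\circ\alpha_{g}=\operatorname{Ad}(U_{g})\circ\pi$ and $U_{g}\Omega=\Omega$, I would rewrite
\[
\mu_{\bigtriangleup}\circ\bigl(\alpha_{g}\otimes_{m}\iota_{\tilde{A}}\bigr)(a\otimes b)=\langle\Omega,\pi(\alpha_{g}(a))b\Omega\rangle=\langle b^{*}\Omega,U_{g}\pi(a)\Omega\rangle.
\]
The mean ergodic theorem for unitary representations of a countable discrete amenable group along any right F\o lner sequence gives $|\Lambda_{n}|^{-1}\sum_{g\in\Lambda_{n}}U_{g}\pi(a)\Omega\to P\pi(a)\Omega$ in norm, where $P$ is the orthogonal projection onto the $U$-invariant vectors. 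Ergodicity of $\mathbf{A}$, combined with \cite[Theorem 4.3.20]{BR} (exactly as in the proof of Theorem 2.1), identifies this fixed subspace with $\mathbb{C}\Omega$, so $P\pi(a)\Omega=\mu(a)\Omega$. Taking inner products with $b^{*}\Omega$ yields the limit $\mu(a)\tilde{\mu}(b)=(\mu\otimes_{m}\tilde{\mu})(a\otimes b)$.

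Finally, each averaged functional is a state and hence of norm one, so the Ces\`{a}ro sequence is uniformly bounded in the dual of $A\otimes_{m}\tilde{A}$; pointwise convergence on the dense subalgebra $A\odot\tilde{A}$ therefore upgrades to weak-* convergence on the whole algebra by a routine $\varepsilon/3$ argument. The only genuine obstacle is the extension to $\otimes_{m}$, which is the reason the maximal C*-tensor product is the correct framework; once that is in place, the rest is a transparent application of the amenable mean ergodic theorem and the identification of the fixed point space forced by ergodicity.
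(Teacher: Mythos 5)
Your proof is correct, but it takes a genuinely different route from the paper's. You prove the convergence directly: after extending $\mu_{\bigtriangleup}$ via the universal property of $\otimes_{m}$ (same as the paper), you reduce the claim on elementary tensors to the identity $\mu_{\bigtriangleup}\circ(\alpha_{g}\otimes_{m}\iota_{\tilde{A}})(a\otimes b)=\langle b^{*}\Omega,U_{g}\pi(a)\Omega\rangle$ (valid because $b\in\pi(A)'$ and $U_{g}\Omega=\Omega$), invoke the amenable mean ergodic theorem along a right F\o lner sequence to get $|\Lambda_{n}|^{-1}\sum_{g\in\Lambda_{n}}U_{g}\pi(a)\Omega\to\mu(a)\Omega$, and finish with uniform boundedness of the states. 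All of these steps are sound. The paper instead avoids the mean ergodic theorem entirely: it takes a weak* cluster point $\rho$ of the Ces\`{a}ro averages in the compact state space of $A\otimes_{m}\tilde{A}$, uses the F\o lner property only to show that $\rho$ restricts to a joining of $\mathbf{A}$ with the identity system $(\tilde{A},\tilde{\mu},\iota_{\tilde{A}})$, and then applies Theorem 2.1 (ergodic systems are disjoint from identity systems) to force $\rho=\mu\otimes_{m}\tilde{\mu}$; uniqueness of the cluster point then gives convergence. The trade-off is worth noting: your argument is more elementary and gives norm convergence of the Hilbert-space averages, but it inverts the logical flow the paper is deliberately setting up --- Proposition 2.3 is intended as an \emph{application of the joining characterization of ergodicity}, from which the weak mean ergodic theorem (Corollary 2.4) is then deduced. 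If Proposition 2.3 is itself proved from the mean ergodic theorem, Corollary 2.4 becomes circular as an application, which is precisely the point the author flags when remarking that the conventional proof is ``more elementary, and delivers a stronger result than the current approach.''
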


\begin{proof}
We will make use of the identity system
$\mathbf{B}:=\left(\tilde{A},\tilde{\mu},\iota_{\tilde{A}}\right)$.
The maximal tensor product has the property that $\delta$ in
Definition 2.2 can be extended to a $\ast $-homomorphism
$A\otimes_{m}\tilde{A}\rightarrow B(H)$, and hence we can easily
extend the diagonal state to a state $\mu_{\bigtriangleup}$ on
$A\otimes_{m}\tilde{A}$. (The general case of such extensions is
discussed in Section 4.) Then
\[
\omega_{n}:=\frac{1}{\left|
\Lambda_{n}\right|}\sum_{g\in\Lambda_{n}}
\mu_{\bigtriangleup}\circ\left(\alpha_{g}\otimes_{m}\iota_{\tilde{A}}
\right)
\]
is also a state on $A\otimes_{m}\tilde{A}$. The set $S$ of states of the
unital C*-algebra $A\otimes_{m}\tilde{A}$ is weakly* compact (see for example
\cite[Theorem 2.3.15]{BR}), hence the sequence $\left(  \omega_{n}\right)  $
has a cluster point $\rho$ in $S$ in the weak* topology.

We now show that $\rho|_{A\odot\tilde{A}}$ is a joining of
$\mathbf{A}$ and $\mathbf{B}$. For each $\varepsilon>0$, $a\in A$,
$b\in\tilde{A}$ and $N\in\mathbb{N}$, there is an $n>N$ such that
$\left|  \rho\left( a\otimes b\right)  -\omega_{n}\left( a\otimes
b\right) \right| <\varepsilon$. Furthermore,
$\omega_{n}\left(a\otimes1_{\tilde{A}}\right) =\mu(a)$ and
$\omega_{n}\left( 1_{A}\otimes b\right)  =\tilde{\mu}(b)$. Therefore
$\left| \rho\left(  a\otimes1_{\tilde{A}}\right)  -\mu(a)\right|
<\varepsilon$ and $\left|  \rho\left(  1_{A}\otimes b\right)
-\tilde{\mu}(b)\right| <\varepsilon$ for all $\varepsilon>0$, and so
$\rho\left(  a\otimes 1_{\tilde{A}}\right)  =\mu(a)$ and
$\rho\left(1_{A}\otimes b\right) =\tilde{\mu}(b)$. Next note that
for all $h\in G$
\begin{align*}
&  \left|  \omega_{n}\circ\left(
\alpha_{h}\otimes_{m}\iota_{\tilde{A}} \right)  \left(  a\otimes
b\right)  -\omega_{n}\left(  a\otimes b)\right)
\right| \\
&  =\frac{1}{\left|  \Lambda_{n}\right|  }\left|  \sum_{g\in\left(
\Lambda_{n}h\right)  \backslash\Lambda_{n}}\mu_{\bigtriangleup}\circ\left(
\alpha_{g}\otimes_{m}\iota_{\tilde{A}}\right)  \left(  a\otimes b\right)
-\sum_{g\in\Lambda_{n}\backslash\left(  \Lambda_{n}h\right)  }\mu
_{\bigtriangleup}\circ\left(  \alpha_{g}\otimes_{m}\iota_{\tilde{A}}\right)
\left(  a\otimes b\right)  \right| \\
&  \leq\frac{\left|  \Lambda_{n}\bigtriangleup\left(  \Lambda_{n}h\right)
\right|  }{\left|  \Lambda_{n}\right|  }\left\|  a\otimes b\right\| \\
&  \rightarrow0
\end{align*}
as $n\rightarrow\infty$. Since $\rho$ is a cluster point of
$\left(\omega_{n}\right)$, we conclude that
$\rho\circ\left(\alpha_{g}\otimes
_{m}\iota_{\tilde{A}}\right)=\rho$, and therefore
$\rho|_{A\odot\tilde{A}}\in J\left(\mathbf{A},\mathbf{B}\right)$.

By Theorem 2.1 and continuity it follows that
$\rho=\mu\otimes_{m}\tilde{\mu}$. In particular this means that
$\mu\otimes_{m}\tilde{\mu}$ is the unique weak* cluster point of
$\left(\omega_{n}\right)$, which implies that
$\left(\omega_{n}\right)$ converges to $\mu\otimes_{m}\tilde{\mu}$,
as required.
\end{proof}

To clarify the meaning of Proposition 2.3, we include the following weak mean
ergodic theorem in terms of a Hilbert space (the conventional proof of the
mean ergodic theorem is both more elementary, and delivers a stronger result
than the current approach, but again, our motivation here is to illustrate
that results regarding joinings can have nontrivial consequences). This result
essentially turns the logic of the proof of \cite[Theorem 3.7]{D} around:

\begin{corollary}
Consider the situation in Definition 2.2 and Proposition 2.3, and
let $U$ be the unitary representation of $\alpha$ on $H$, in other
words $\pi\left(\alpha _{g}(a)\right) =U_{g}\pi(a)U_{g}^{\ast}$ and
$U_{g}\Omega=\Omega$. Then
\[
\lim_{n\rightarrow\infty}\frac{1}{\left|  \Lambda_{n}\right|  }\sum
_{g\in\Lambda_{n}}\left\langle U_{g}x,y\right\rangle =\left\langle \left(
\Omega\otimes\Omega\right)  x,y\right\rangle
\]
for all $x,y\in H$, where $\Omega\otimes\Omega$ is the projection of
$H$ onto $\mathbb{C}\Omega$, i.e. $\left( \Omega\otimes\Omega\right)
x=\Omega\left\langle \Omega,x\right\rangle$.
\end{corollary}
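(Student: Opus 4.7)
The plan is to test the weak* convergence of Proposition 2.3 on elementary tensors $a\otimes b\in A\odot\tilde{A}$, and to recognise the resulting scalar identities as convergence of matrix coefficients of $U_g$ between the dense families of vectors $\pi(a^*)\Omega$ and $b\Omega$ in $H$.

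First I would unpack
\[
\mu_{\bigtriangleup}\!\circ\!(\alpha_g\otimes_m\iota_{\tilde{A}})(a\otimes b)=\langle\Omega,\pi(\alpha_g(a))b\Omega\rangle
\]
and move the operator $\pi(\alpha_g(a))$ across the inner product: since $\pi(\alpha_g(a))^{*}=\pi(\alpha_g(a^*))=U_g\pi(a^*)U_g^{*}$ and $U_g\Omega=\Omega$, this expression equals $\langle U_g\pi(a^*)\Omega,b\Omega\rangle$. Setting $x:=\pi(a^*)\Omega$ and $y:=b\Omega$, Proposition 2.3 then gives
\[
\lim_{n\to\infty}\frac{1}{|\Lambda_n|}\sum_{g\in\Lambda_n}\langle U_gx,y\rangle=\mu(a)\tilde{\mu}(b).
\]
A short calculation identifies the right-hand side with the desired expression: $\mu(a)=\langle\Omega,\pi(a)\Omega\rangle=\langle\pi(a^*)\Omega,\Omega\rangle=\langle x,\Omega\rangle$ and $\tilde{\mu}(b)=\langle\Omega,b\Omega\rangle=\langle\Omega,y\rangle$, so $\mu(a)\tilde{\mu}(b)=\langle x,\Omega\rangle\langle\Omega,y\rangle=\langle(\Omega\otimes\Omega)x,y\rangle$. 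This proves the corollary whenever $x\in\pi(A)\Omega$ and $y\in\tilde{A}\Omega$; both subspaces are dense in $H$, the former by GNS cyclicity of $\Omega$ for $\pi(A)$, the latter because faithfulness of $\mu$ makes $\Omega$ separating for $\pi(A)$ and hence cyclic for $\tilde{A}=\pi(A)'$.

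The final step is to pass from this dense bilinear set to all of $H\times H$ by a standard $3\varepsilon$ approximation: each averaged operator $|\Lambda_n|^{-1}\sum_{g\in\Lambda_n}U_g$ is a contraction, so the sesquilinear forms $(x,y)\mapsto|\Lambda_n|^{-1}\sum_{g\in\Lambda_n}\langle U_gx,y\rangle$ are jointly bounded by $\|x\|\,\|y\|$, and pointwise convergence on a dense product set then lifts automatically. The only step genuinely requiring care is the initial algebraic rewriting that turns $\mu_{\bigtriangleup}\!\circ\!(\alpha_g\otimes_m\iota_{\tilde{A}})(a\otimes b)$ into $\langle U_gx,y\rangle$; once that identity is in hand, the cyclic-separating observation and the density extension are both routine.
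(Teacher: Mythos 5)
Your proposal is correct and follows essentially the same route as the paper: evaluate the weak* limit of Proposition 2.3 on elementary tensors, identify $\mu_{\bigtriangleup}\circ(\alpha_g\otimes_m\iota_{\tilde{A}})(a\otimes b)$ with a matrix coefficient $\langle U_gx,y\rangle$, and extend by density of $\pi(A)\Omega$ and $\tilde{A}\Omega$ in $H$. The only difference is cosmetic (you take $x=\pi(a^*)\Omega$ where the paper takes $x=\pi(a)\Omega$) and you spell out the uniform-boundedness step of the density extension, which the paper leaves implicit.
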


\begin{proof} For $x:=\pi(a)\Omega$ and $y:=b\Omega$ where $a\in A$
and $b\in\tilde{A}$, it follows from Proposition 2.3 that
\begin{align*}
\left\langle \left(  \Omega\otimes\Omega\right)  x,y\right\rangle  &
=\mu\otimes_{m}\tilde{\mu}\left(  a^{\ast}\otimes b\right) \\
&  =\lim_{n\rightarrow\infty}\frac{1}{\left|  \Lambda_{n}\right|  }\sum
_{g\in\Lambda_{n}}\mu_{\bigtriangleup}\left(  \alpha_{g}(a^{\ast})\otimes
b\right) \\
&  =\lim_{n\rightarrow\infty}\frac{1}{\left|  \Lambda_{n}\right|  }\sum
_{g\in\Lambda_{n}}\left\langle U_{g}x,y\right\rangle
\end{align*}
but $\pi(A)\Omega$ and $\tilde{A}\Omega$ are both dense in $H$, since $\mu$ is
faithful and normal.
\end{proof}

We now proceed to weak mixing, our goal being an analogue of Theorem 2.1.

\begin{definition}
Consider a dynamical system $\mathbf{A}$ and let
$\left(H,\pi,\Omega\right)$ be the cyclic representation of
$\left(A,\mu\right)$ obtained from the GNS construction, and let $U$
be thecorresponding unitary representation of $\alpha$ on $H$, i.e.
$U_{g} \pi(a)\Omega=\pi(\alpha_{g}(a))\Omega$. An
\textit{eigenvector} of $U$ is an $x\in H\backslash\{0\}$ such that
there is a function, called its \textit{eigenvalue},
$\chi_{x}:G\rightarrow\mathbb{C}$ such that $U_{g} x=\chi_{x}(g)x$
for all $g\in G$. The eigenvalue $g\mapsto1$ will be denoted as $1$.
Denote by $H_{0}$ the Hilbert subspace spanned by the eigenvectors
of $U$. The set of all eigenvalues is denoted by
$\sigma_{\mathbf{A}}$ and is called the \textit{point spectrum} of
$\mathbf{A}$. We call $\mathbf{A}$ \textit{weakly mixing} if $\dim
H_{0}=1$. We say $\mathbf{A}$ \textit{has discrete spectrum} if
$H_{0}=H$. We call $\mathbf{A}$ \textit{compact} if the orbit
$U_{G}x$ is totally bounded in $H$ for every $x\in H$, or,
equivalently, if $\alpha_{G}(a)$ is totally bounded in
$\left(A,\left\|  \cdot\right\| _{\mu}\right)$ for every $a\in A$,
where $\left\|  a\right\|_{\mu}:=\sqrt{\mu\left(a^{\ast}a\right)}$.
\end{definition}

We have the following equivalence when $G$ is abelian:

\begin{proposition}
Let $G$ be abelian. Then $\mathbf{A}$ has discrete spectrum if and
only if it is compact.
\end{proposition}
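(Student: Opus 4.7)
The plan is to analyse the unitary representation $U$ of $G$ on the GNS Hilbert space $H$, using that the ``compact'' condition means precisely that every orbit $U_G x$ is totally bounded in $H$ (i.e.\ $U$ is almost periodic). The forward direction, discrete spectrum $\Rightarrow$ compact, does not require $G$ to be abelian. If $x$ is an eigenvector of $U$ with eigenvalue $\chi_x$, then $|\chi_x(g)|=1$ by unitarity of $U_g$, so $U_G x \subset \{\lambda x : |\lambda|=1\}$ is totally bounded; finite linear combinations of eigenvectors have orbits contained in a bounded subset of a finite-dimensional space; and for a general $x \in H_0$, approximation by such finite linear combinations $x_n$ gives $\sup_{g \in G}\|U_g x - U_g x_n\| = \|x - x_n\| \to 0$, so $U_G x$ is uniformly approximated by totally bounded sets and is therefore itself totally bounded.

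For the reverse direction, compact $\Rightarrow$ discrete spectrum, I would follow the classical strategy of passing to the enveloping group. Let $K$ denote the closure of $\{U_g : g \in G\}$ in $B(H)$ under the strong operator topology. The hypothesis that each $U_G x$ is relatively compact in $H$, together with a Tychonoff-style diagonal-subnet argument, shows that $K$ is compact in the strong operator topology. Since $G$ is abelian, $U_G$ is abelian and hence so is $K$. A standard compact-semigroup argument (taking subnet limits of powers of each $T \in K$ to produce inverses) then promotes $K$ from a compact abelian semigroup to a compact abelian topological group contained in the unitary group of $H$, and the inclusion $K \hookrightarrow B(H)$ becomes a strongly continuous unitary representation of $K$.

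The main obstacle is this last step: making rigorous that $K$ is a compact topological group under the strong operator topology, given that operator multiplication is only separately continuous there. Once this is in place, I would invoke the Peter--Weyl theorem for compact abelian groups (equivalently, Pontryagin duality combined with the spectral theorem) to decompose $H$ as an orthogonal direct sum of one-dimensional $K$-invariant subspaces, one for each character of $K$ that occurs in the representation. Every such $K$-eigenvector is automatically an eigenvector of each $U_g$ (with eigenvalue equal to the character evaluated at $U_g$), so $H_0 = H$ and $\mathbf{A}$ has discrete spectrum. The abelian hypothesis on $G$ is used precisely at this point to ensure that the irreducible representations of $K$ are one-dimensional.
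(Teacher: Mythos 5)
Your argument is correct in outline but takes a genuinely different route from the paper, which disposes of the proposition in one line by quoting the splitting theorem for almost periodic vectors (Krengel, Section 2.4, or Lemma 6.6 of Beyers--Duvenhage--Str\"{o}h): for abelian $G$ the closed span $H_{0}$ of the eigenvectors of $U$ is exactly the set of vectors whose orbits $U_{G}x$ are totally bounded, after which the equivalence is immediate from the definitions. What you propose is essentially a self-contained proof of that cited lemma via the enveloping (Jacobs--de Leeuw--Glicksberg/Ellis) group; that buys independence from the reference at the cost of length. Your forward direction is complete, and you are right that it needs no commutativity.

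The obstacle you flag in the reverse direction is the genuine crux, but it is a fillable gap rather than a failure, and you should close it rather than leave it conditional. First, on a norm-bounded subset of $B(H)$ multiplication is in fact \emph{jointly} continuous for the strong operator topology, by the estimate $\left\| S_{\alpha}T_{\alpha}x - STx\right\| \leq \left\| S_{\alpha}\right\| \left\| T_{\alpha}x - Tx\right\| + \left\| (S_{\alpha}-S)Tx\right\|$; so the separate-continuity worry does not arise for a set of unitaries. Second, compactness of $K$ follows from Tychonoff: $T\mapsto (Tx)_{x\in H}$ embeds the strong closure of $U_{G}$ homeomorphically into $\prod_{x\in H}\overline{U_{G}x}$, each factor being norm-compact by hypothesis (totally bounded and complete), and the image is closed. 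Third, every $T\in K$ is an isometry (strong limit of unitaries preserves norms), a subnet limit of the corresponding inverses $U_{g_{\alpha}}^{-1}$ supplies a two-sided inverse in $K$ by joint continuity, and on the unitary group strong convergence $T_{\alpha}\rightarrow T$ forces $T_{\alpha}^{\ast}\rightarrow T^{\ast}$ strongly (expand $\left\| T_{\alpha}^{\ast}x - T^{\ast}x\right\| ^{2}$ and use $\left\| T_{\alpha}^{\ast}x\right\| =\left\| x\right\| $), so inversion is continuous and $K$ is a compact topological group acting strongly continuously on $H$. Peter--Weyl then finishes the proof exactly as you describe, with abelianness of $G$ (hence of $K$) used precisely where you locate it, which is consistent with the paper's remark that the nonabelian case is unclear.
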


\begin{proof}
By \cite[Section 2.4]{K} (or see \cite[Lemma 6.6]{BDS} for the
special case that we are using here), $H_{0}$ is the set of all
$x\in H$ whose orbits $U_{G}x$ are totally bounded in $H$.
\end{proof}

It is not clear if Proposition 2.6 can be extended to nonabelian $G$.
Therefore we are going to give the sufficient and necessary conditions for
weak mixing separately in terms of compactness and discrete spectra respectively.

\begin{theorem}
Let $\mathbf{A}$ be ergodic. If $\mathbf{A}$ is disjoint from all
ergodic compact systems, then it is weakly mixing.
\end{theorem}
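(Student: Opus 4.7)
The approach is to prove the contrapositive: assuming $\mathbf{A}$ is ergodic but not weakly mixing, I will exhibit an ergodic compact subsystem $\mathbf{F}$ of $\mathbf{A}$ and a joining of $\mathbf{A}$ with $\mathbf{F}$ (or with its mirror image) that differs from the product state, contradicting the disjointness hypothesis. The plan follows the classical blueprint: failure of weak mixing produces non-scalar eigenvectors in $H$; these yield non-scalar eigenoperators in $A$; the von Neumann algebra these generate is a non-trivial ergodic compact subsystem of $\mathbf{A}$; and the diagonal construction of \cite[Construction 3.4]{D} applied to this subsystem then provides the required non-product joining.

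Since $\dim H_{0}\geq 2$, there is an eigenvector of $U$ in $H\setminus\mathbb{C}\Omega$, and its eigenvalue is necessarily a character $G\rightarrow\mathbb{T}$ (from $U_{gh}=U_{g}U_{h}$ and unitarity). The key technical step is to pass from this Hilbert-space information back to $A$, i.e., to produce a non-scalar $a\in A$ with $\alpha_{g}(a)=\chi(g)a$ for all $g$ and some character $\chi$. This is where ergodicity of $\mathbf{A}$, together with the fact that $\Omega$ is cyclic and separating for $\pi(A)''$, becomes essential, and it can be carried out via standard spectral-subspace and modular-theoretic arguments in the spirit of \cite[Lemma 6.6]{BDS} and \cite[Section 2.4]{K}.

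Granted such eigenoperators, let $A_{e}$ denote their linear span in $A$. Since the product of eigenoperators with eigenvalues $\chi_{1},\chi_{2}$ is an eigenoperator with eigenvalue $\chi_{1}\chi_{2}$, and the adjoint has conjugate eigenvalue, $A_{e}$ is a unital $\ast$-subalgebra of $A$ invariant under each $\alpha_{g}$. Let $F$ be its weak closure; then $\mathbf{F}:=(F,\mu|_{F},\alpha|_{F})$ is a subsystem of $\mathbf{A}$, non-trivial by the previous paragraph. It is ergodic, because $F_{\alpha}\subseteq A_{\alpha}=\mathbb{C}1$. It is compact, because for any eigenoperator $a$ one has $\alpha_{G}(a)\subset\mathbb{T}\cdot a$, which is totally bounded in $(A,\|\cdot\|_{\mu})$; this property passes to finite linear combinations (whose orbits remain in a finite-dimensional space) and then to the $\|\cdot\|_{\mu}$-closure.

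Finally, applying \cite[Construction 3.4]{D} to the subsystem $\mathbf{F}\subset\mathbf{A}$ yields a joining of $\mathbf{A}$ with $\tilde{\mathbf{F}}$, which is itself ergodic and compact since those properties are preserved under the modular anti-isomorphism $a\mapsto Ja^{\ast}J$. Because $F$ contains a non-scalar element, this joining differs from the product state, as one sees by pairing a non-scalar eigenoperator in $F$ against its mirror counterpart. Hence $\mathbf{A}$ is not disjoint from the ergodic compact system $\tilde{\mathbf{F}}$, contradicting the assumption. The main obstacle is the second paragraph: in the classical setting unimodular $L^{2}$-eigenfunctions automatically lie in $L^{\infty}$, but in the W*-setting extracting algebra elements from Hilbert-space eigenvectors requires genuine use of the modular theory.
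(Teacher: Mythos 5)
Your proposal is correct and follows essentially the same route as the paper: it argues the contrapositive by producing a nontrivial ergodic compact subsystem from the eigenoperators that witness the failure of weak mixing (which is precisely the content of the results the paper cites from \cite{BDS}), and then applies the diagonal joining of \cite[Construction 3.4 and Lemma 3.5]{D} to contradict disjointness. The only difference is that you unpack the cited technical steps (eigenoperator extraction via modular theory, compactness of the generated subalgebra) that the paper delegates entirely to references.
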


\begin{proof}
The plan is essentially the same as for the proof of the
corresponding direction in Theorem 2.1 (see \cite[Theorem 3.3]{D}).
Suppose $\mathbf{A}$ is not weakly mixing, then by
\cite[Propositions 6.5 and 6.7(1)]{BDS} it has a nontrivial compact
subsystem, say $\mathbf{F}$. Since $\mathbf{A} $ is ergodic, so is
$\mathbf{F}$. So by \cite[Construction 3.4 and Lemma 3.5]{D} we are
finished.
\end{proof}

\begin{theorem}
If $\mathbf{A}$ and $\mathbf{B}$ are ergodic, and $\mathbf{B}$ has
discrete spectrum with $\sigma
_{\mathbf{A}}\cap\sigma_{\mathbf{B}}=\{1\}$, then $\mathbf{A}$ is disjoint from
$\mathbf{B}$. In particular, if $\mathbf{A}$ is weakly
mixing, then it is disjoint from all ergodic systems with discrete
spectrum.
\end{theorem}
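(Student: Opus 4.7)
My plan is to mimic the proof of Theorem 2.1, using the same conditional expectation operator $P_{\omega}:H_{\mu}\rightarrow H_{\nu}$ attached to a joining $\omega\in J(\mathbf{A},\mathbf{B})$, but exploit the spectral information about $V$ instead of the triviality of the $V$-action. Pick any $\omega\in J(\mathbf{A},\mathbf{B})$ and recall that the adjoint satisfies the intertwining relation $U_{g}P_{\omega}^{\ast}=P_{\omega}^{\ast}V_{g}$.

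The first key step is the following spectral observation: if $y\in H_{\nu}$ is an eigenvector of $V$ with eigenvalue $\chi\in\sigma_{\mathbf{B}}$, then
\[
U_{g}P_{\omega}^{\ast}y=P_{\omega}^{\ast}V_{g}y=\chi(g)P_{\omega}^{\ast}y,
\]
so $P_{\omega}^{\ast}y$ is either $0$ or an eigenvector of $U$ with the same eigenvalue $\chi$. If $\chi\neq 1$, then by the assumption $\sigma_{\mathbf{A}}\cap\sigma_{\mathbf{B}}=\{1\}$ we must have $P_{\omega}^{\ast}y=0$. If $\chi=1$, then $V_{g}y=y$, and since $\mathbf{B}$ is ergodic, \cite[Theorem 4.3.20]{BR} forces $y\in\mathbb{C}\Omega_{\omega}$, and then $P_{\omega}^{\ast}y=y=\langle\Omega_{\omega},y\rangle\Omega_{\omega}$ (using that $\Omega_{\omega}$ lies in both $H_{\mu}$ and $H_{\nu}$ as the common cyclic vector, so $P_{\omega}^{\ast}\Omega_{\omega}=\Omega_{\omega}$).

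Because $\mathbf{B}$ has discrete spectrum, the eigenvectors of $V$ span $H_{\nu}$, so by linearity and continuity the previous step upgrades to
\[
P_{\omega}^{\ast}y=\langle\Omega_{\omega},y\rangle\Omega_{\omega}\qquad\text{for all }y\in H_{\nu}.
\]
Substituting $y=\gamma_{\nu}(b)$ and computing exactly as in the proof of Theorem 2.1,
\[
\omega(a\otimes b)=\langle\gamma_{\mu}(a^{\ast}),\gamma_{\nu}(b)\rangle=\langle\gamma_{\mu}(a^{\ast}),P_{\omega}^{\ast}\gamma_{\nu}(b)\rangle=\nu(b)\langle\gamma_{\mu}(a^{\ast}),\Omega_{\omega}\rangle=\mu(a)\nu(b),
\]
which shows $\omega=\mu\odot\nu$ and hence $\mathbf{A}$ and $\mathbf{B}$ are disjoint. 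The ``in particular'' clause is immediate: if $\mathbf{A}$ is weakly mixing then every eigenvector of $U$ is a scalar multiple of $\Omega_{\omega}$, which forces $\sigma_{\mathbf{A}}=\{1\}$, so the spectral disjointness hypothesis is automatic against any ergodic $\mathbf{B}$ with discrete spectrum.

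The main obstacle is a minor conceptual check rather than a calculation: one must verify that the action of $P_{\omega}^{\ast}$ on the fixed vector of $V$ is indeed the identity (i.e.\ that $P_{\omega}^{\ast}\Omega_{\omega}=\Omega_{\omega}$), and then that the pointwise formula on eigenvectors extends to all of $H_{\nu}$. The former follows from the defining property $\langle P_{\omega}x,y\rangle=\langle x,y\rangle$ together with $\gamma_{\mu}(1_{A})=\Omega_{\omega}=\gamma_{\nu}(1_{B})$, while the latter uses only that $H_{0}=H_{\nu}$ (discrete spectrum) and boundedness of $P_{\omega}^{\ast}$. No further technicalities are required.
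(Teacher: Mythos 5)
Your proof is correct and follows essentially the same route as the paper: the intertwining relation $U_{g}P_{\omega}^{\ast}=P_{\omega}^{\ast}V_{g}$ applied to eigenvectors of $V$, the hypothesis $\sigma_{\mathbf{A}}\cap\sigma_{\mathbf{B}}=\{1\}$ to force $P_{\omega}^{\ast}y=0$ when $\chi\neq1$, and discrete spectrum to pass from the span of eigenvectors to all of $H_{\nu}$. The only cosmetic differences are that you invoke ergodicity of $\mathbf{B}$ (where the paper uses ergodicity of $\mathbf{A}$) to handle the case $\chi=1$, and that your unified formula $P_{\omega}^{\ast}y=\left\langle \Omega_{\omega},y\right\rangle \Omega_{\omega}$ tacitly uses the (standard, easily checked) fact that eigenvectors with $\chi\neq1$ are orthogonal to the $V$-fixed vector $\Omega_{\omega}$ --- both points are fine.
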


\begin{proof}
As in the proof of Theorem 2.1, we employ a conditional expectation
operator. So consider any $\omega\in
J\left(\mathbf{A},\mathbf{B}\right)$, and then use the same notation
as in Theorem 2.1's proof. Let $y\in H_{\nu}$ be any eigenvector of
$V$ with eigenvalue $\chi$, then $y=\gamma_{\nu}(e)$ for some $e\in
B$ by \cite[Theorem 2.5]{S}, while
$U_{g}P_{\omega}^{\ast}y=\chi(g)P_{\omega}^{\ast}y$. So either
$P_{\omega }^{\ast}y=0$ or
$\chi\in\sigma_{\mathbf{A}}\cap\sigma_{\mathbf{B}}$. In the latter
case $P_{\omega}^{\ast}y\in\mathbb{C}\Omega_{\omega}$, since
$\mathbf{A}$ is ergodic, hence in either case we have
$P_{\omega}^{\ast}y\in\mathbb{C}\Omega_{\omega}$. Therefore
\begin{align*}
\left\langle \gamma_{\mu}(a^{\ast}),\gamma_{\nu}(e)\right\rangle  &
=\left\langle \gamma_{\mu}(a^{\ast}),P_{\omega}^{\ast}\gamma_{\nu
}(e)\right\rangle =\left\langle \gamma_{\mu}(a^{\ast}),\Omega_{\omega
}\right\rangle \left\langle \Omega_{\omega},P_{\omega}^{\ast}\gamma_{\nu
}(e)\right\rangle \\
&  =\mu(a)\left\langle \Omega_{\omega},\gamma_{\nu}(e)\right\rangle
\end{align*}
for all $a\in A$. For an arbitrary $b\in B$ one has a sequence $\left(
b_{n}\right)  $ of linear combinations of such \textit{eigenoperators }$e$,
such that $\gamma_{\nu}(b_{n})\rightarrow\gamma_{\nu}(b)$, since $\mathbf{B}$
has discrete spectrum. Hence
\begin{align*}
\omega\left(  a\otimes b\right)   &  =\left\langle \gamma_{\mu}(a^{\ast
}),\gamma_{\nu}(b)\right\rangle =\lim_{n\rightarrow\infty}\left\langle
\gamma_{\mu}(a^{\ast}),\gamma_{\nu}(b_{n})\right\rangle =\lim_{n\rightarrow
\infty}\mu(a)\left\langle \Omega,\gamma_{\nu}(b_{n})\right\rangle \\
&  =\mu(a)\nu(b)
\end{align*}
which means that $J\left(  \mathbf{A},\mathbf{B}\right)  =\{\mu\odot\nu\}$.
\end{proof}

\section{The quantum group duals of discrete groups}

Halmos \cite{H} studied dynamical systems consisting of an automorphism of a
compact abelian group, with the automorphism providing an action of
$\mathbb{Z}$ on the group by iteration. In particular he characterized
ergodicity (which turns out to be equivalent to strong mixing in this case) in
terms of the orbits of the induced action in the dual group (or character
group). Here we study a generalization of this type of system, where the
compact group is replaced by a compact quantum group obtained as the dual of a
discrete group $\Gamma$ which need not be abelian. For simplicity we also only
consider actions of $G=\mathbb{Z}$ in this section.

We use the von Neumann algebra setting for locally compact quantum groups
(which include both our discrete group and its compact quantum group dual as
special cases), as developed by Kustermans and Vaes \cite{KV4} (also see
\cite{vD} and \cite{KV3}). Below we briefly review the definitions of this
theory to fix the conventions and notations that we will use. Other useful
sources regarding this material is \cite{VPhD}, and \cite[Section 18]{St}
which focusses on Hopf-von Neumann algebras and Kac algebras.

We should mention that since we are ultimately only interested in discrete
groups and their dual quantum groups, we could in principle work in the
setting of Kac algebras or even in terms of group von Neumann algebras.
However the framework set up in \cite{KV4} is simple and powerful, and very
convenient to work in, while the language of quantum groups also makes the
generalization from abelian to general discrete groups clearer, and opens the
window to possible further generalization when replacing the discrete group by
a discrete quantum group (which we will not do in this paper).

A \textit{locally compact quantum group} is defined to be a von Neumann
algebra $M$ with a unital normal $\ast$-homomorphism $\Delta:M\rightarrow
M\otimes M$ (where $M\otimes N$ denotes the von Neumann algebraic tensor
product of two von Neumann algebras), such that $(\Delta\otimes\iota_{M}%
)\circ\Delta=(\iota_{M}\otimes\Delta)\circ\Delta$ (where $\iota_{M}$ denotes
the identity map on $M)$, and on which there exist normal semi-finite faithful
(n.s.f.) weights $\varphi$ and $\psi$ which are left and right invariant
respectively, namely $\varphi\left(  (\theta\otimes\iota_{M})\circ
\Delta(a)\right)  =\varphi(a)\theta(1)$ for all $a\in\mathcal{M}_{\varphi}^{+}
$ and $\psi\left(  (\iota_{M}\otimes\theta)\circ\Delta(a)\right)
=\psi(a)\theta(1)$ for all $a\in\mathcal{M}_{\psi}^{+}$, for all $\theta\in
M_{\ast}^{+}$, where $M_{\ast}^{+}$ is the positive normal linear functionals
on $M$, and $\mathcal{M}_{\varphi}^{+}=\left\{  a\in M^{+}:\varphi
(a)<\infty\right\}  $. This quantum group is denoted as $\left(
M,\Delta\right)  $. We will call $\left(  M,\Delta\right)  $ a
\textit{compact} quantum group if we can take $\varphi=\psi$ as a state, which
we will call the \textit{Haar state}. Note that the Haar state is faithful and normal.

The dual $\left(  \hat{M},\hat{\Delta}\right)  $ of $\left(
M,\Delta\right) $ is again a locally compact quantum group and is
defined as follows (also see \cite[Definition 3.1]{vD}), where we
assume $M$ is in standard form with respect to the Hilbert space
$H$: Denote by $W\in M\otimes B(H)$ the so-called multiplicative
unitary of $\left( M,\Delta\right)  $ with respect to the GNS
construction on $H$ obtained from some $\varphi$ as above; see
\cite[Theorem 1.2]{KV4}. Let $\hat{\lambda }:M_{\ast}\rightarrow
B(H):\theta\mapsto(\theta\otimes\iota)(W)$ with $\iota$ the identity
on $B(H)$. Then $\hat{M}$ is defined to be the $\sigma$-weak closure
of $\left\{ \hat{\lambda}(\theta):\theta\in M_{\ast}\right\}  $ and
$\hat{\Delta}$ is defined by $\hat{\Delta}(a):=\Sigma
W(a\otimes1)W^{\ast }\Sigma$ where $\Sigma:H\otimes H\rightarrow
H\otimes H$ is the ``flip map'' and $1\in M$ is the identity
operator on $H$. The symbol $\iota$ will always denote the identity
map on some von Neumann algebra which will be clear from context.

Next we give the basic definitions and results which we use to build our
dynamical systems.

\begin{definition}
An \textit{automorphism} of $\left(M,\Delta\right)$ is a
$\ast$-automorphism $\alpha:M\rightarrow M$ such that
$\Delta\circ\alpha=(\alpha\otimes\alpha)\circ\Delta$.
\end{definition}

\begin{proposition}
Let $\alpha$ be an automorphism of a compact quantum group
$\left(A,\Delta\right)$ with Haar state $\mu$. Then
$\mu\circ\alpha=\mu$.
\end{proposition}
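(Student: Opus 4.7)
The plan is to show that $\mu \circ \alpha$ is itself a left-invariant state on $(A,\Delta)$, and then invoke uniqueness of the Haar state of a compact quantum group to conclude $\mu \circ \alpha = \mu$.

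Setting $\nu := \mu \circ \alpha$, I first note that $\nu$ is automatically a normal state on $A$, since $\alpha$ is a unital $\ast$-automorphism of a von Neumann algebra (hence normal and positive, with $\alpha(1)=1$). The core calculation rests on the defining property $\Delta \circ \alpha = (\alpha \otimes \alpha) \circ \Delta$, which I rewrite as $\Delta = (\alpha^{-1} \otimes \alpha^{-1}) \circ \Delta \circ \alpha$. For any $\theta \in A_{*}^{+}$ and $a \in A^{+}$ this yields
\[
(\theta \otimes \iota)\Delta(a) = \alpha^{-1}\bigl((\theta \circ \alpha^{-1} \otimes \iota)\Delta(\alpha(a))\bigr),
\]
and applying $\nu$ together with the left-invariance of $\mu$ built into the definition of $(A,\Delta)$ gives
\[
\nu\bigl((\theta \otimes \iota)\Delta(a)\bigr) = \mu\bigl((\theta \circ \alpha^{-1} \otimes \iota)\Delta(\alpha(a))\bigr) = \mu(\alpha(a))\,(\theta \circ \alpha^{-1})(1) = \nu(a)\,\theta(1),
\]
where I use $\alpha^{-1}(1)=1$. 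Thus $\nu$ is left-invariant in the paper's sense.

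Since the Haar state of a compact quantum group is the unique normal state on $A$ satisfying this left-invariance property, it follows that $\nu = \mu$, as required. The only real point to verify along the way is that the intermediate functional $\theta \circ \alpha^{-1}$ genuinely lies in $A_{*}^{+}$, which is immediate from normality and positivity of the $\ast$-automorphism $\alpha^{-1}$. The main external ingredient is uniqueness of the Haar state on a compact quantum group, which is standard and can simply be cited; modulo that fact, the proof is essentially just the functoriality of $\Delta$ under $\alpha$.
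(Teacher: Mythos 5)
Your proof is correct and takes essentially the same route as the paper: show that $\mu\circ\alpha$ is left invariant by exploiting $\Delta\circ\alpha=(\alpha\otimes\alpha)\circ\Delta$, then invoke uniqueness of the (left invariant) Haar state. The only cosmetic difference is that the paper verifies the strong form of left invariance, $[\iota\otimes(\mu\circ\alpha)]\circ\Delta(a)=\mu(\alpha(a))1$, citing \cite[Proposition 3.1]{KV4}, whereas you verify the definitional slice-form $\nu\bigl((\theta\otimes\iota)\Delta(a)\bigr)=\nu(a)\theta(1)$ directly.
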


\begin{proof}
From the strong form of left invariance
\cite[Proposition 3.1]{KV4} we have
$\mu\circ\alpha(a)1=\alpha^{-1}[\mu(\alpha(a))1]=\alpha
^{-1}[(\iota\otimes\mu)\circ\Delta\circ\alpha(a)]=\alpha^{-1}\circ
(\iota\otimes\mu)\circ(\alpha\otimes\alpha)\circ\Delta(a)=[\iota\otimes
(\mu\circ\alpha)]\circ\Delta(a)$ but this says that $\mu\circ\alpha$
is also left invariant, \ hence by uniqueness of left invariant
states (which also explains the terminology \textit{the} Haar state)
we have $\mu\circ\alpha=\mu $.
\end{proof}

Hence $\mathbf{A=}\left(A,\mu,\alpha\right)$ is a dynamical system
with $G=\mathbb{Z}$ by simply setting $\alpha_{n}:=\alpha^{n}$ for
$n\in\mathbb{Z}$. Let us now look at the specific case that will
interest us throughout the rest of this section, and also fix the
notation that we will use:

Let $\Gamma$ be any group and assign to it the discrete topology and counting
measure. We set
\begin{equation}
\left(A,\Delta\right):=\left(\widehat{L^{\infty}(\Gamma)},\hat{\Delta}_{\Gamma}\right)
\tag{3.1}
\end{equation}
where $[\Delta_{\Gamma}(f)](g,h):=f(gh)$ for all $f\in
L^{\infty}(\Gamma)$ and $g,h\in\Gamma$, and where of course we view
the elements of $L^{\infty} (\Gamma)$ as linear operators on
$H:=L^{2}(\Gamma)$ by multiplication. In this situation we in fact
have that $A$ is generated by $\left\{  \lambda
(g):g\in\Gamma\right\}  $ where we write
$\lambda(g)\equiv\lambda(\theta_{g})$ with
$\lambda:L^{\infty}(\Gamma)_{\ast}\rightarrow B(H)$ defined in the
same way as $\hat{\lambda}$ above, and $\theta_{g}(f):=f(g)$, which
translates into $\lambda:\Gamma\rightarrow B(H)$ being a unitary
representation of $\Gamma$ with $[\lambda(g)f](h)=f(g^{-1}h)$ for
all $f\in H $ and $g,h\in\Gamma$, and having the property
$\Delta(\lambda(g))=\lambda(g)\otimes\lambda(g)$. In this case we
also have that the Haar state $\mu$ is tracial, i.e.
$\mu(ab)=\mu(ba)$ for all $a,b\in M$, however this doesn't play a
direct role in our further work. Furthermore, let
\[
T:\Gamma\rightarrow\Gamma
\]
be any automorphism of the group $\Gamma$. From $T$ we now obtain an
automorphism of $\left(  A,\Delta\right)  $ as follows: Define a
unitary operator $U:H\rightarrow H$ by $Uf:=f\circ T$. Since
$\left[U^{\ast} \lambda(g)Uf\right] (h)=\left[ \lambda(g)(f\circ
T)\right] \left( T^{-1}(h)\right) =\left( f\circ T\right) \left(
g^{-1}T^{-1}(h)\right) =f\left( T(g)^{-1}h\right) =\left[
\lambda(T(g))f\right] (h)$, we have
\[
U^{\ast}\lambda(g)U=\lambda(T(g))
\]
which in particular means that the set of generators of $A$ is
invariant under $U^{\ast}(\cdot)U$ and hence $A$ itself as well. So
we have a well-defined mapping
\[
\alpha:A\rightarrow A:a\mapsto U^{\ast}aU
\]
which we call the \textit{dual} of $T$. It remains to show that $\alpha$ is an
automorphism of $\left(  A,\Delta\right)  $. Note that $\Delta\circ
\alpha(\lambda(g))=\Delta(\lambda(T(g)))=\lambda(T(g))\otimes\lambda
(T(g))=(\alpha\otimes\alpha)(\lambda(g)\otimes\lambda(g))=(\alpha\otimes
\alpha)\circ\Delta(\lambda(g))$, and by linearity and $\sigma$-weak continuity
this extends to all of $A$, that is to say $\Delta\circ\alpha=(\alpha
\otimes\alpha)\circ\Delta$ as required.

We will refer to the dynamical system
$\mathbf{A=}\left(A,\mu,\alpha\right)$ as the \textit{dual system}
of $\left(\Gamma,T\right)$, and this notation will be fixed
throughout the rest of this section. Our eventual goal in this
section is a refinement of Theorems 2.1 (one direction) and 2.7 for
dual systems, however we first develop some general theory regarding
dual systems.

As we show next, every automorphism of $\left(A,\Delta\right)$ is
the dual of some automorphism of $\Gamma$, hence assuming the
automorphism $T$ of $\Gamma$ to be given places no restriction on
the dynamics obtained as automorphisms of $\left(A,\Delta\right)$.

We will use the following additional notation: By $\delta_{g}$ with
$g\in\Gamma$, we denote the element of $H$ defined by
$\delta_{g}(g)=1$ and $\delta_{g}(h)=0$ for $g\neq h\in\Gamma$. In
particular we set $\Omega :=\delta_{1}$ where $1$ here denotes the
identity of $\Gamma$. Then $\Omega$ is cyclic and separating for
$A$, and $\mu(a)=\left\langle \Omega,a\Omega \right\rangle $ so
$\left(  H,\iota_{A},\Omega\right)  $ is the cyclic representation
of $\left(  A,\mu\right)  $ obtained in the GNS construction. Also
note that $\lambda(g)\Omega=\delta_{g}$. We will use the notation
$\chi_{g}:=\delta_{g}$ when we want to view this function as an
element of $L^{\infty}(\Gamma)$ rather than $H=L^{2}(\Gamma)$; this
makes some the arguments slightly easier to read. Using the notation
$\gamma:A\rightarrow H:a\mapsto a\Omega$, the multiplicative unitary
$W$ of $\left(  A,\Delta \right)  $ has the following defining
property (see \cite[Theorem 1.2]{KV4}):
$W^{\ast}\left[\gamma(a)\otimes\gamma(b)\right]=(\gamma\otimes
\gamma)\left[\Delta(b)(a\otimes1)\right]$.

\begin{theorem}
Every automorphism $\alpha$ of $\left(A,\Delta\right)$ in (3.1) is
the dual of some automorphism $T$ of the discrete group $\Gamma$.
\end{theorem}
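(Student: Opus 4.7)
The plan is to identify the unitaries $u\in A$ satisfying $\Delta(u)=u\otimes u$ as being exactly the elements $\lambda(g)$, $g\in\Gamma$. Once this is achieved, an automorphism $\alpha$ of $(A,\Delta)$, which preserves both unitarity and the comultiplication, must permute the set $\{\lambda(g):g\in\Gamma\}$, yielding a bijection $T\colon\Gamma\to\Gamma$; verifying that $T$ is a group automorphism and that $\alpha$ is the dual of $T$ is then routine.

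The principal step is the group-like characterization. I would use that $\mu$ is a faithful trace, so $H=L^{2}(\Gamma)$ is the GNS space with orthonormal basis $\{\gamma(\lambda(g))=\delta_{g}:g\in\Gamma\}$, together with the compact quantum group identity $(\mu\otimes\mu)\circ\Delta=\mu$. The latter ensures that the map $\gamma(a)\mapsto(\gamma\otimes\gamma)(\Delta(a))$ extends to an isometry $\Psi\colon H\to H\otimes H$, and evaluation on basis vectors gives $\Psi\delta_{g}=\delta_{g}\otimes\delta_{g}$ for each $g\in\Gamma$. Given a group-like unitary $u$ with Fourier expansion $\gamma(u)=\sum_{g}c_{g}\delta_{g}$, continuity of $\Psi$ yields $\Psi\gamma(u)=\sum_{g}c_{g}\,\delta_{g}\otimes\delta_{g}$, while the hypothesis $\Delta(u)=u\otimes u$ gives $\Psi\gamma(u)=\gamma(u)\otimes\gamma(u)=\sum_{g,h}c_{g}c_{h}\,\delta_{g}\otimes\delta_{h}$. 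Comparing coefficients forces $c_{g}c_{h}=\delta_{g,h}c_{g}$, so each $c_{g}\in\{0,1\}$ with at most one nonzero entry; since $u\neq 0$, exactly one $c_{g}$ equals $1$, and hence $u=\lambda(g)$.

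With this in hand, the relation $\Delta\circ\alpha=(\alpha\otimes\alpha)\circ\Delta$ together with the fact that $\alpha$ is a $\ast$-automorphism shows that $\alpha(\lambda(g))$ is again a group-like unitary, so $\alpha(\lambda(g))=\lambda(T(g))$ for a unique $T(g)\in\Gamma$. The same argument applied to $\alpha^{-1}$ furnishes an inverse to $T$, so $T\colon\Gamma\to\Gamma$ is a bijection. Multiplicativity of $\alpha$, combined with $\lambda(gh)=\lambda(g)\lambda(h)$ and the injectivity of $\lambda$ (which is immediate from $\lambda(g)\Omega=\delta_{g}$), yields $T(gh)=T(g)T(h)$, so $T$ is an automorphism of $\Gamma$.

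Finally, the dual of $T$, namely $a\mapsto U^{\ast}aU$ with $Uf=f\circ T$, satisfies $U^{\ast}\lambda(g)U=\lambda(T(g))=\alpha(\lambda(g))$ for every $g$ by the computation already recorded preceding the statement. Since $\alpha$ and $a\mapsto U^{\ast}aU$ are both $\sigma$-weakly continuous $\ast$-homomorphisms agreeing on the $\sigma$-weakly dense $\ast$-subalgebra generated by $\{\lambda(g):g\in\Gamma\}$, they coincide on all of $A$, so $\alpha$ is the dual of $T$. The main obstacle lies in the Fourier-coefficient comparison of the first step; once that is in place, the remaining steps are bookkeeping.
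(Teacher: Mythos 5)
Your proof is correct, but it takes a genuinely different route from the paper's. The paper defines the unitary $U$ implementing $\alpha$ on $H=L^{2}(\Gamma)$, proves the commutation relation $(U\otimes U)W=W(U\otimes U)$ with the multiplicative unitary, uses this to induce an automorphism $\hat{\alpha}=U(\cdot)U^{\ast}$ of the Pontryagin dual $\left(\hat{A},\hat{\Delta}\right)=\left(L^{\infty}(\Gamma),\Delta_{\Gamma}\right)$, appeals to admittedly ``tedious but fairly elementary'' classical arguments to write $\hat{\alpha}(f)=f\circ T$, and finally removes a possible phase $k(g)$ in $\alpha(\lambda(g))\Omega=k(g)\lambda(T(g))\Omega$ by exploiting group-likeness and the separating vector. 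You instead stay entirely inside $A$: you characterize the group-like unitaries of $\left(A,\Delta\right)$ as precisely the $\lambda(g)$ via the isometry $\Psi:\gamma(a)\mapsto\Delta(a)(\Omega\otimes\Omega)$ (isometric because $(\mu\otimes\mu)\circ\Delta=\mu$) together with a Fourier-coefficient comparison, and then observe that $\alpha$ must permute this set. Your route avoids the dual quantum group and the unproven classical step altogether, and it subsumes the paper's phase argument (the $k(g)=[k(g)]^{2}$ trick is a special case of your classification); what the paper's route buys is a display of the duality machinery around which the section is built, plus the relation $(U\otimes U)W=W(U\otimes U)$, which is of independent use. One small point to make explicit in your write-up: $(\gamma\otimes\gamma)(\Delta(a))$ should be read as $\Delta(a)(\Omega\otimes\Omega)$, since $\Delta(a)$ lies in the von Neumann algebraic tensor product rather than the algebraic one; with that reading your isometry and coefficient computation are exactly right.
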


\begin{proof}
Using the notation above, we define a unitary
operator
$U:H\rightarrow H$ by $U^{\ast}a\Omega:=\alpha(a)\Omega$. We first show that%
\begin{equation}
(U\otimes U)W=W(U\otimes U) \tag{3.2}
\end{equation}
to enable us to define an automorphism of $\left(
\hat{A},\hat{\Delta }\right)  $. Using the defining property of $W$
we have
\begin{align*}
(U^{\ast}\otimes U^{\ast})W^{\ast}(\delta_{g}\otimes\delta_{h})  &  =(U^{\ast
}\otimes U^{\ast})(\gamma\otimes\gamma)\left[  \Delta(\lambda(h))(\lambda
(g)\otimes1)\right] \\
&  =(U^{\ast}\otimes U^{\ast})(\gamma\otimes\gamma)\left[  \lambda
(hg)\otimes\lambda(h)\right] \\
&  =\left[  \gamma\circ\alpha\circ\lambda(hg)\right]  \otimes\left[
\gamma\circ\alpha\circ\lambda(h)\right] \\
&  =(\gamma\otimes\gamma)\left(  \left[  (\alpha\otimes\alpha)\circ
\Delta(\lambda(h))\right]  \left[  (\alpha\circ\lambda(g))\otimes1\right]
\right) \\
&  =(\gamma\otimes\gamma)\left(  \left[  \Delta(\alpha(\lambda(h)))\right]
\left[  \alpha(\lambda(g))\otimes1\right]  \right) \\
&  =W^{\ast}(U^{\ast}\otimes U^{\ast})(\delta_{g}\otimes\delta_{h})
\end{align*}
which proves (3.2). For any $\theta\in A_{\ast}$ it follows from the
definition of $\hat{\lambda}$ and from (3.2) that
\begin{align*}
U\hat{\lambda}(\theta)U^{\ast}  &  =(\theta\otimes\iota)\left[  (1\otimes
U)W(1\otimes U^{\ast})\right] \\
&  =(\theta\otimes\iota)\left[  (U^{\ast}\otimes1)W(U\otimes1)\right] \\
&  =\left[  (\theta\circ\alpha)\otimes\iota\right]  (W)\\
&  =\hat{\lambda}(\theta\circ\alpha)
\end{align*}
and therefore $U\hat{A}U^{\ast}=\hat{A}$ so
\[
\hat{\alpha}:\hat{A}\rightarrow\hat{A}:a\mapsto UaU^{\ast}
\]
is a well-defined $\ast$-automorphism of $\hat{A}$. Next observe
from the definition of $\hat{\Delta}$ and again using (3.2) that
\begin{align*}
(\hat{\alpha}\otimes\hat{\alpha})\circ\hat{\Delta}(a)  &  =(U\otimes U)\Sigma
W(a\otimes1)W^{\ast}\Sigma(U^{\ast}\otimes U^{\ast})\\
&  =\Sigma W(\hat{\alpha}(a)\otimes1)W^{\ast}\Sigma\\
&  =\hat{\Delta}\circ\hat{\alpha}(a)
\end{align*}
in other words $\hat{\alpha}$ is an automorphism of
$\left(\hat{A},\hat {\Delta}\right)$. However, by Pontryagin duality
(see for example \cite[p. 75]{KV4}) $\left(
\hat{A},\hat{\Delta}\right) =\left(  L^{\infty}
(\Gamma),\Delta_{\Gamma}\right)  $. With somewhat tedious but fairly
elementary arguments one can then show that $\hat{\alpha}(f)=f\circ
T$ for all $f\in L^{\infty}(\Gamma)$ for some automorphism $T$ of
the group $\Gamma$. Lastly we show that $\alpha$ is the dual of $T$
in the sense defined earlier. Define $U_{T}:H\rightarrow H:f\mapsto
f\circ T$, then $U_{T}^{\ast}
fU_{T}\delta_{g}=f(T^{-1}(g))\delta_{g}=U^{\ast}fU\delta_{g}$ for
all $f\in L^{\infty}(\Gamma)$, so $U_{T}U^{\ast}f=fU_{T}U^{\ast}$,
in particular for $f=\chi_{h}$. Hence for all $g\neq h$ in $\Gamma$
we have $\chi_{h} U_{T}U^{\ast}\delta_{g}=0$ and so
$U^{\ast}\delta_{g}=k(g)U_{T}^{\ast} \delta_{g}=k(g)\delta_{T(g)}$
for some complex number $k(g)$ of modulus $1$. But then
$\alpha(\lambda(g))\Omega=U^{\ast}\delta_{g}=k(g)\lambda(T(g))\Omega$
and therefore
$k(g)\lambda(T(g))\otimes\lambda(T(g))=\Delta(k(g)\lambda
(T(g))=(\alpha\otimes\alpha)\circ\Delta(\lambda(g))=[k(g)]^{2}\lambda
(T(g))\otimes\lambda(T(g))$ since $\Omega$ is separating for $A$, so
$k(g)=1$ which means that $U_{T}=U$ as required.
\end{proof}

Having set up the framework, we can now start doing ergodic theory. We first
discuss the theorem of Halmos in the current setting. Recall that $\mathbf{A}
$ is \textit{strongly mixing} if
\[
\lim_{n\rightarrow\infty}\mu(\alpha^{n}(a)b)=\mu(a)\mu(b)
\]
for all $a,b\in A$. We will say that $g\in\Gamma$ has a finite orbit
under $T$ if the orbit
$T^{\mathbb{N}}(g):=\left\{T^{n}(g):n\in\mathbb{N}\right\}$ is a
finite set, where $\mathbb{N=}\left\{1,2,3,...\right\}$.

The following result is fairly standard, but often expressed in terms of group
C*-algebras, or in the language of group von Neumann algebras (see
\cite[2.12]{A} for an example of this type of result). For completeness,
and since we use this result later, we
include a proof based on that of the abelian case in for example \cite[Section
2.5]{P}.

\begin{theorem}
If the dual system $\mathbf{A}$ is ergodic, then the only element of
$\Gamma$ with a finite orbit under $T$ is its identity $1$.
Conversely, if $1$ is the only element of $\Gamma$ with finite orbit
under $T$, then $\mathbf{A}$ is strongly mixing. It follows that
$\mathbf{A}$ is strongly mixing if and only if it is ergodic.
\end{theorem}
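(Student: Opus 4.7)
The plan is to prove the two stated implications separately; the closing equivalence of ergodicity and strong mixing then follows from the standard fact that strong mixing entails ergodicity.

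For the first implication I will argue the contrapositive. Suppose some $g\in\Gamma\setminus\{1\}$ has finite $T$-orbit, so that $T^{k}(g)=g$ for some $k\geq 1$. Using the defining property $\alpha(\lambda(h))=\lambda(T(h))$, a cyclic reindexing shows that
\[
a:=\sum_{j=0}^{k-1}\lambda(T^{j}(g))
\]
is $\alpha$-invariant. To conclude that $a\notin\mathbb{C}1_{A}$ I will note that $\mu(\lambda(h))=\langle\delta_{1},\delta_{h}\rangle=\delta_{h,1}$, and that $T^{j}(g)\neq 1$ for every $j$ because $T$ is a bijection with $T(1)=1$; hence $\mu(a)=0$. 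At the same time $a\neq 0$, since $\lambda(h)\Omega=\delta_{h}$ and the $\delta_{h}$ form an orthonormal family in $H$, so that the $\lambda(h)$ are linearly independent. Thus $a$ is a nontrivial element of $A_{\alpha}$, contradicting ergodicity.

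For the converse I will translate strong mixing into a weak-operator statement on $H=L^{2}(\Gamma)$. Because $\pi=\iota_{A}$ and $U\Omega=\Omega$, the identity $\mu(\alpha^{n}(a)b)=\langle a^{\ast}\Omega,U^{n}b\Omega\rangle$ together with density of $A\Omega$ in $H$ shows that strong mixing is equivalent to
\[
\langle x,U^{n}y\rangle\to\langle x,\Omega\rangle\langle\Omega,y\rangle\qquad(x,y\in H),
\]
where $U$ is the unitary $f\mapsto f\circ T$. Since $\|U^{n}\|=1$, a standard $\varepsilon/3$ density argument reduces this to the total family $x=\delta_{p}$, $y=\delta_{q}$. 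A direct computation gives $U^{n}\delta_{q}=\delta_{T^{-n}(q)}$, so $\langle\delta_{p},U^{n}\delta_{q}\rangle=\delta_{p,T^{-n}(q)}$. For $p=1$ this is constantly $\delta_{q,1}$, which matches the target. For $p\neq 1$, two distinct values of $n\geq 1$ with $T^{n}(p)=q$ would force $T^{m}(p)=p$ for some $m\geq 1$, giving $p$ a finite orbit and hence $p=1$ by hypothesis --- a contradiction. So $\langle\delta_{p},U^{n}\delta_{q}\rangle$ is eventually $0$, matching $\langle\delta_{p},\Omega\rangle=0$.

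The main obstacle, such as it is, will be purely bookkeeping: keeping the direction of $U$ and of $T^{n}$ versus $T^{-n}$ straight when converting between $\mu(\alpha^{n}(a)b)$ and the Hilbert-space pairing, and noting the linear independence of $\{\lambda(h)\}_{h\in\Gamma}$ so that $a\neq 0$ in the first part. Once these are fixed the argument is routine, and the final sentence of the theorem follows by combining the two implications with the standard observation that strong mixing implies ergodicity.
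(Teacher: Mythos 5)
Your proof is correct and follows essentially the same route as the paper: argue the contrapositive by summing over a finite orbit to produce a nontrivial invariant object, then verify mixing on the orthonormal family $\{\delta_g\}$ and extend by density. The only (harmless) difference is that you build the invariant element $\sum_{j}\lambda(T^{j}(g))$ inside the fixed point algebra $A_{\alpha}$, which appeals directly to the definition of ergodicity, whereas the paper builds the invariant vector $\sum_{j}\delta_{T^{j}(g)}$ in $H$ and invokes the Hilbert-space characterization of ergodicity.
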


\begin{proof}
Suppose $g\in\Gamma\backslash\{1\}$ has a finite orbit
under $T$. Then there is a smallest $n\in\mathbb{N}$ such that
$T^{n}g=g$. Hence this is the smallest $n$ in $\mathbb{N}$ for which
$(U^{\ast})^{n}\delta _{g}=\delta_{g}$. Set
\[
x:=\delta_{g}+U^{\ast}\delta_{g}+(U^{\ast})^{2}\delta_{g}+...+(U^{\ast})^{n-1}\delta_{g}
\]
then $U^{\ast}x=x$. It is also easily seen that $U^{\ast}\Omega=\Omega$, but
as we now show, $x\notin\mathbb{C}\Omega$. Since $x=\delta_{g}+\delta
_{T(g)}+...+\delta_{T^{n-1}(g)}$, while $g\neq1$ and hence $T^{j}(g)\neq1$, we
have $x(1)=0\neq1=\Omega(1)$. At the same time $x(g)=1$ so $x\neq0$, so
$x\notin\mathbb{C}\Omega$. This means the fixed point space of $U^{\ast}$ has
dimension larger than $1$, and therefore $\mathbf{A}$ is not ergodic.

Conversely, suppose $1$ is the only element of $\Gamma$ with finite orbit
under $T$. Consider $g,h\in G$. If $g=h=1$, then it is easily seen that
$\lim_{n\rightarrow\infty}\left\langle (U^{\ast})^{n}\delta_{g},\delta
_{h}\right\rangle =1=\left\langle \delta_{g},\Omega\right\rangle \left\langle
\Omega,\delta_{h}\right\rangle $. Otherwise, if at least one of $g$ or $h$ is
not $1$, then from our supposition, $T^{n}(h)\neq g$ for $n$ large enough, and
therefore it is again easily seen that $\lim_{n\rightarrow\infty}\left\langle
(U^{\ast})^{n}\delta_{g},\delta_{h}\right\rangle =0=\left\langle \delta
_{g},\Omega\right\rangle \left\langle \Omega,\delta_{h}\right\rangle $. From
this we deduce that
\[
\lim_{n\rightarrow\infty}\left\langle (U^{\ast})^{n}x,y\right\rangle
=\left\langle x,\Omega\right\rangle \left\langle \Omega,y\right\rangle
\]
for all $x,y\in H$, but this means that $\mathbf{A}$ is strongly
mixing.
\end{proof}

Weak mixing is an intermediate condition between ergodicity and strong mixing
and is therefore also equivalent to these two conditions. Another simple
corollary of Theorem 3.4 (which can also be seen directly) is that if
$1<\left|  \Gamma\right|  <\infty$, then $\mathbf{A}$ cannot be ergodic.

We now move on to subsystems and compactness. In our current
situation, if $\mathbf{A}$ is not weakly mixing then it is not
ergodic, and so one can obtain a nontrivial compact subsystem by
considering the fixed point algebra of $\alpha$. But a result purely
in terms of dual systems would be preferable, and from the point of
view of weak mixing we want a result in terms of a compact subsystem
that need not be an identity system. Hence we consider the
following:

Let $E:=\left\{  g\in\Gamma:T^{\mathbb{N}}(g)\text{ is finite}\right\}  $ and
let $F$ denote the von Neumann algebra generated by $\left\{  \lambda(g):g\in
E\right\}  $. Then

\begin{theorem}
The system $\mathbf{F}=\left(F,\kappa ,\varphi\right)
:=\left(F,\mu|_{F},\alpha|_{F}\right)$ is isomorphic to the dual
system of $\left( E,T|_{E}\right)$ and it is a compact subsystem of
$\mathbf{A}$. Furthermore, if $\mathbf{F}$ is trivial then
$\mathbf{A}$ is ergodic.
\end{theorem}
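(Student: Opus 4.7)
The plan has five pieces: verify that $E$ is a $T$-invariant subgroup (so that $(E, T|_E)$ is a valid input); check that $\mathbf{F}$ is a subsystem of $\mathbf{A}$; identify $\mathbf{F}$ with the dual system of $(E, T|_E)$; prove compactness; and deduce ergodicity in the trivial case using Theorem 3.4. For the group-theoretic step, I would use that $T$ is an automorphism to see that $T^n(gh) = T^n(g)T^n(h)$ and $T^n(g^{-1}) = T^n(g)^{-1}$ range over finite sets when $g, h \in E$, so $E$ is closed under the group operations; and that if $T^k(g) = g$ for some minimal $k \geq 1$, then $T^{-1}(g) = T^{k-1}(g) \in T^{\mathbb{N}}(g)$, so $T|_E$ is an automorphism of $E$ and the $\mathbb{Z}$-orbit of each $g \in E$ equals its $\mathbb{N}$-orbit. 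That $\mathbf{F}$ is a subsystem is then immediate, since $\alpha(\lambda(g)) = \lambda(T(g))$ sends generators of $F$ to generators.

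For the isomorphism with the dual system of $(E, T|_E)$, the key decomposition is $\ell^2(\Gamma) = \bigoplus_{Ex} \ell^2(Ex)$ over the right cosets of $E$: on each summand $\lambda(g)$ with $g \in E$ acts as the left-regular representation $\lambda_E(g)$ under the identification $\delta_{ex} \leftrightarrow \delta_e$, so restriction to the summand $\ell^2(E)$ produces a normal $\ast$-homomorphism $F \to \widehat{L^\infty(E)}$. This is injective because $\Omega = \delta_1 \in \ell^2(E)$ is separating for $F \subseteq A$, surjective because its image is $\sigma$-weakly closed and contains every $\lambda_E(g)$, state-preserving because both Haar states are $\langle \delta_1, \cdot\,\delta_1\rangle$, and intertwines the two dynamics on generators.

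The main obstacle will be compactness. For a single generator $\lambda(g)$ with $g \in E$, the orbit $\{\lambda(T^n g) : n \in \mathbb{Z}\}$ is finite and hence trivially totally bounded in $(F, \|\cdot\|_\mu)$; for a finite linear combination $b$ of such generators the orbit lies in a fixed finite-dimensional subspace and is $\|\cdot\|_\mu$-bounded (as $\alpha$ preserves $\|\cdot\|_\mu$), so it too is totally bounded. The technical point is promoting this to an arbitrary $a \in F$: my plan is to exploit that $\{\delta_g : g \in E\}$ is an orthonormal basis of $\overline{F\Omega} = \ell^2(E)$, which makes finite linear combinations of generators $\|\cdot\|_\mu$-dense in $F$; choosing such a $b$ with $\|a - b\|_\mu < \varepsilon/2$ and using $\alpha$-invariance of $\|\cdot\|_\mu$ then refines a finite $\varepsilon/2$-net for the orbit of $b$ into an $\varepsilon$-net for that of $a$. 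This step is delicate precisely because the natural dense subset of $F$ is dense only in the GNS norm and not in the operator norm.

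Finally, if $\mathbf{F}$ is trivial then every $\lambda(g)$ with $g \in E$ is a scalar, so $\delta_g = \lambda(g)\Omega$ lies in $\mathbb{C}\delta_1$, forcing $g = 1$. Therefore $E = \{1\}$, and the converse direction of Theorem 3.4 immediately yields strong mixing, and in particular ergodicity, of $\mathbf{A}$.
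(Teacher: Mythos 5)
Your proposal is correct and follows essentially the same route as the paper: restriction of $F$ to $K=\overline{F\Omega}=\ell^{2}(E)$ gives the isomorphism with the dual system of $(E,T|_{E})$, compactness is obtained by approximating an arbitrary element in the $\|\cdot\|_{\mu}$-norm (equivalently, a vector of $K$) by a finite linear combination of the $\delta_{g}$, $g\in E$, whose orbit is finite, and the triviality statement reduces via $E=\{1\}$ to the ``conversely'' direction of Theorem 3.4. The step you flag as delicate is in fact harmless, since compactness is defined in terms of the GNS norm $\|\cdot\|_{\mu}$ rather than the operator norm, which is exactly how the paper's argument proceeds.
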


\begin{proof}
One easily sees that $T|_{E}$ is an automorphism of the subgroup $E$
of $\Gamma$, hence $\alpha(F)=F$. So $\mathbf{F}$ is indeed a
subsystem of $\mathbf{A}$. It is also readily seen that if $K$ is
the closure of $F\Omega$ in $H$, then $\pi:F\rightarrow
B(K):a\mapsto a|_{K}$ is well-defined and $\left(
K,\pi,\Omega\right)  $ is the cyclic representation of $\left(
F,\kappa\right)  $ obtained in the GNS construction. Also note that
$K$ is the closure of $D:=$ span$\left\{  \delta_{g}:g\in E\right\}
$.

Note that $\pi$ is injective since $\Omega$ is separating for $F$,
and then one can verify that $\pi(F)$ is generated by
$\lambda_{E}:E\rightarrow B(K)$ where $\left[ \lambda_{E}(g)f\right]
(h):=f(g^{-1}h)$ for $f\in K=L^{2}(E)$ in terms of the counting
measure on $E$. So $\pi(F)=\widehat{L^{\infty}(E)}$. It is readily
verified that $\pi$ is an isomorphism (as defined in Section 1) of
the dynamical system $\mathbf{F}$ and the dual system of
$\left(E,T|_{E}\right)$.

Consider any $v=\sum_{j=1}^{r}c_{j}\delta_{g_{j}}$ in $D$ and let
$n_{g}=\left|  T^{\mathbb{N}}(g)\right|  $ denote the length of
$g$'s orbit, i.e. it is the smallest element of $\mathbb{N}$ such
that $T^{n_{g}}(g)=g$. Then $(U^{\ast})^{n_{g_{1}}...n_{g_{r}}}v=v$,
in other words $v$ also has a finite orbit. For arbitrary $x\in K$
and $\varepsilon>0$ there will be a $v\in D$ with $\left\|
x-v\right\|  <\varepsilon$ and therefore
$\left\|(U^{\ast})^{n}x-(U^{\ast})^{n}v\right\| <\varepsilon$ for
all $n\in\mathbb{N}$, but since $(U^{\ast})^{\mathbb{N}}v$ is
finite, $(U^{\ast})^{\mathbb{N}}x $ is totally bounded. We conclude
that $\mathbf{F}$ is compact.

When $\mathbf{A}$ is not ergodic, $E\neq\{1\}$ by Theorem 3.4, and
therefore $K\neq\mathbb{C}\Omega$ so $F\neq\mathbb{C}1$ by the
definition of $K$. In other words, $\mathbf{F}$ is nontrivial.
\end{proof}

We will refer to $\mathbf{F}$ defined above as the \textit{finite
orbit subsystem} of $\mathbf{A}$.

Before we proceed with subsystems and joinings, we give the
following analogue of Theorem 3.4 as an application of (part of)
Theorem 3.5:

\begin{theorem}
The dual system $\mathbf{A}$ is compact if and only if all the
orbits in $\left(\Gamma,T\right)$ are finite.
\end{theorem}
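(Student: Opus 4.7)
My plan is to combine Theorem 3.5 with a direct observation about the standard orthonormal basis $\{\delta_g : g \in \Gamma\}$ of $H = L^2(\Gamma)$; the two directions become essentially trivial once the right vantage point is chosen.

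For the ``if'' direction, I would observe that when every $g \in \Gamma$ has finite orbit under $T$, the set $E = \{g \in \Gamma : T^{\mathbb{N}}(g) \text{ is finite}\}$ appearing in Theorem 3.5 equals $\Gamma$. Consequently $F$, the von Neumann algebra generated by $\{\lambda(g) : g \in E\}$, equals $A$, and the finite orbit subsystem $\mathbf{F}$ coincides with $\mathbf{A}$. Theorem 3.5 asserts that $\mathbf{F}$ is compact, so $\mathbf{A}$ is compact.

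For the converse I would argue contrapositively: suppose some $g \in \Gamma$ has infinite $T$-orbit; I would then exhibit a single vector in $H$ whose orbit is not totally bounded, which by the Hilbert space formulation of Definition 2.5 rules out compactness. The GNS unitary representation of $\alpha$ on $H$ is $U^*$ in the notation of this section: indeed $U\Omega = U\delta_1 = \delta_1 = \Omega$, and from $\alpha(a) = U^* a U$ one gets $U^* (a\Omega) = \alpha(a)\Omega$. The identity $U^*\delta_h = \delta_{T(h)}$ already appearing in Section 3 then yields $(U^*)^n \delta_g = \delta_{T^n(g)}$ for every $n \in \mathbb{Z}$. Since the $\delta_h$ are pairwise orthonormal in $L^2(\Gamma)$ and since the full $\mathbb{Z}$-orbit of $g$ is infinite (being infinite is equivalent for forward and two-sided orbits because $T$ is an automorphism), the set $\{(U^*)^n \delta_g : n \in \mathbb{Z}\}$ is an infinite orthonormal set, with any two distinct elements at distance $\sqrt{2}$. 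Such a set admits no finite $\varepsilon$-net for $\varepsilon < \sqrt{2}/2$, so it is not totally bounded and $\mathbf{A}$ fails to be compact.

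The statement is not really obstructed by any hard step once Theorem 3.5 is in hand; the only thing to watch is the direction of the unitary, i.e.\ correctly identifying the GNS implementation of $\alpha$ as $U^*$ rather than $U$, so that the orbit we compute is genuinely the one appearing in Definition 2.5.
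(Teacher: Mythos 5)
Your proposal is correct and follows essentially the same route as the paper: the ``if'' direction is the identical appeal to Theorem 3.5 via $E=\Gamma$, and your contrapositive argument for the converse is just a rephrasing of the paper's direct one, resting on the same observation that $(U^{\ast})^{n}\delta_{g}=\delta_{T^{n}(g)}$ and that distinct basis vectors $\delta_{h},\delta_{j}$ lie at distance $\sqrt{2}$, so an infinite orbit cannot be totally bounded. Your remark identifying the GNS implementation of $\alpha$ as $U^{\ast}$ and the equivalence of finiteness for forward and two-sided orbits are both correct and appropriately careful.
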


\begin{proof}
Suppose $\mathbf{A}$ is compact. Then in particular for any
$g\in\Gamma$ the orbit
$\delta_{T^{\mathbb{N}}(g)}:=\left\{\delta_{T^{n}
(g)}:n\in\mathbb{N}\right\} =(U^{\ast})^{\mathbb{N}}\delta_{g}$ is
totally bounded in $H$. Hence there is a finite set $N\subset H$
such that for every $\delta_{T^{n}(g)}$ there is an $x\in N$ with
$\left\| \delta_{T^{n}(g)}-x\right\| <1/\sqrt{2}$. However, for any
pair $h\neq j$ in $\Gamma$ we have
$\left\|\delta_{h}-\delta_{j}\right\| =\sqrt{2}$, so for any $x\in
N$ the ball $\left\{y\in H:\left\| y-x\right\| <1/\sqrt{2}\right\}$
contains at most one point in the orbit
$\delta_{T^{\mathbb{N}}(g)}$. Since $N$ is finite it follows that
$T^{\mathbb{N}}(g)$ is finite.

Conversely, assume that all the orbits $T^{\mathbb{N}}(g)$ in
$\Gamma$ are finite, i.e. $E=\Gamma$, so $F=A$ and therefore
$\mathbf{A}$ is compact by Theorem 3.5.
\end{proof}

Using Theorems 3.4 and 3.6, one can now in a standard way easily construct
concrete examples of dynamical systems which are either ergodic or compact or
neither. For example if $\Gamma$ is a free group generated by an alphabet $S$
and $T:S\rightarrow S$ is a bijection which we extend to a automorphism
$T:\Gamma\rightarrow\Gamma$ then we get a dual system which is ergodic or
compact or neither depending on whether the orbits of $T$ on $S$ are all
infinite or all finite or neither. Another example is to consider the group
$\Gamma$ of finite permutations of a possibly infinite set $S$ with
automorphisms given by $g\mapsto h^{-1}gh$ where $h$ is an element of the
group of all bijections of $S$, in which case one can again obtain ergodicity
or compactness or neither by choosing $h$ appropriately.

Next we mention a simple converse for Theorem 3.5:

\begin{proposition} If $\mathbf{A}$ is ergodic,
then it has no nontrivial compact subsystems.
\end{proposition}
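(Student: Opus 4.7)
The plan is to argue contrapositively: starting from a nontrivial compact subsystem of $\mathbf{A}$, I would deduce that $\mathbf{A}$ cannot be ergodic. The key input is that for a dual system, ergodicity already forces the full point spectrum to be trivial, which then leaves no room for a nontrivial compact subsystem.

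First I would record the spectral consequence of the hypothesis. By Theorem 3.4, ergodicity of the dual system $\mathbf{A}$ is equivalent to strong mixing, hence implies weak mixing. So by Definition 2.5 the closed span $H_{0}$ of eigenvectors of the unitary representation $U$ on $H$ reduces to $\mathbb{C}\Omega$.

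Next, suppose $\mathbf{G}=(G,\mu|_{G},\alpha|_{G})$ is a compact subsystem of $\mathbf{A}$ (identifying $G$ with its image in $A$). The GNS cyclic representation of $(G,\mu|_{G})$ is realized inside $H$ on $K:=\overline{G\Omega}$ with cyclic vector $\Omega$, and since $\alpha(G)=G$ and $U$ implements $\alpha$, the corresponding unitary representation is simply $U|_{K}$. Compactness of $\mathbf{G}$ transfers directly: every orbit $(U|_{K})^{\mathbb{Z}}x$ with $x\in K$ is totally bounded. Because the acting group $\mathbb{Z}$ is abelian, Proposition 2.6 applied to $\mathbf{G}$ says $\mathbf{G}$ has discrete spectrum, i.e.\ the eigenvectors of $U|_{K}$ span $K$. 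But every eigenvector of $U|_{K}$ is automatically an eigenvector of $U$ on $H$ (the eigenvalue equation is the same), so every such vector lies in $H_{0}=\mathbb{C}\Omega$. Hence $K\subseteq \mathbb{C}\Omega$, and since $\Omega\in K$ we conclude $K=\mathbb{C}\Omega$.

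Finally, $G\Omega\subseteq K=\mathbb{C}\Omega$ combined with the fact that $\Omega$ is separating for $A$ (and therefore for $G$) forces each $a\in G$ to satisfy $a\Omega=c\Omega=c\cdot 1\cdot\Omega$ for some $c\in\mathbb{C}$, whence $a=c\cdot 1$. Thus $G=\mathbb{C}1$, i.e.\ $\mathbf{G}$ is trivial, contradicting the assumption. There is no genuine obstacle here; the only thing that must be handled carefully is the transfer of the GNS data and of the compactness definition from $\mathbf{A}$ to its subsystem, which is immediate once one identifies the GNS Hilbert space of the subsystem with $\overline{G\Omega}\subset H$. Everything else is powered by Theorem 3.4 (to pin down $H_{0}$) and Proposition 2.6 (to force discrete spectrum on $K$).
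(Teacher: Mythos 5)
Your argument is correct, and it follows the same first step as the paper: invoke Theorem 3.4 to upgrade ergodicity of the dual system to strong (hence weak) mixing, so that $H_{0}=\mathbb{C}\Omega$. Where you diverge is in the second step. The paper simply cites \cite[Theorem 6.8]{BDS} for the implication ``weakly mixing $\Rightarrow$ no nontrivial compact subsystem,'' whereas you prove that implication from scratch: you restrict the GNS data to $K=\overline{G\Omega}$, apply Proposition 2.6 (legitimate here since $G=\mathbb{Z}$ is abelian) to convert compactness of the subsystem into discrete spectrum of $U|_{K}$, observe that eigenvectors of $U|_{K}$ are eigenvectors of $U$ and hence lie in $H_{0}=\mathbb{C}\Omega$, and then use that $\Omega$ is separating to collapse the subalgebra to $\mathbb{C}1$. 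Each of these steps checks out (in particular the identification of the subsystem's GNS space with $\overline{G\Omega}$ and the transfer of the compactness definition are handled correctly). What your route buys is self-containedness within the tools of this paper, at the cost of a page of detail; what the paper's route buys is brevity, at the cost of an external reference whose Theorem 6.8 does essentially the work you carried out by hand.
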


\begin{proof}
Note that $\mathbf{A}$ is weakly mixing by Theorem 3.4 and hence
does not have a nontrivial compact subsystem by \cite[Theorem
6.8]{BDS}.
\end{proof}

We now reach our final goal for this section, namely to make a
connection with joinings.

\begin{theorem}
If the dual system $\mathbf{A}$ is disjoint from all compact dual
systems, then it is ergodic.
\end{theorem}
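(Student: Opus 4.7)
The plan is to prove the contrapositive: assume $\mathbf{A}$ is not ergodic and exhibit a compact dual system that fails to be disjoint from $\mathbf{A}$. By Theorem 3.4, the non-ergodicity of $\mathbf{A}$ forces the finite-orbit set $E=\{g\in\Gamma : T^{\mathbb{N}}(g) \text{ is finite}\}$ to contain some element other than the identity. Theorem 3.5 then provides a nontrivial compact subsystem $\mathbf{F}$ of $\mathbf{A}$ which is isomorphic as a dynamical system to $\mathbf{B}$, the dual system of $(E,T|_{E})$. Since every element of $E$ has finite $T$-orbit by construction, applying Theorem 3.6 to $(E,T|_{E})$ shows that $\mathbf{B}$ is itself a compact dual system.

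Next, I would invoke the same mechanism that drives the converses of Theorems 2.1 and 2.7: \cite[Construction 3.4 and Lemma 3.5]{D} transforms a nontrivial subsystem of $\mathbf{A}$ into a joining that strictly differs from the product state. Applied to $\mathbf{F}\hookrightarrow\mathbf{A}$, this yields a nontrivial joining of $\mathbf{A}$ with the mirror image $\tilde{\mathbf{F}}$ of $\mathbf{F}$. It then remains to identify $\tilde{\mathbf{F}}$, as a dynamical system, with a compact dual system. Because $F$ is the group von Neumann algebra generated by $\{\lambda(g):g\in E\}$ and the Haar state $\mu|_{F}$ is tracial, composing the modular conjugation anti-isomorphism $a\mapsto Ja^{*}J$ with the canonical $*$-anti-automorphism $\lambda(g)\mapsto\lambda(g^{-1})$ of $L(E)$ produces a $*$-isomorphism between $\tilde{F}$ and $F$ that intertwines the transported dynamics with $\alpha|_{F}$. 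Hence $\tilde{\mathbf{F}}\cong\mathbf{B}$, and transporting the joining along this isomorphism produces a joining of $\mathbf{A}$ with the compact dual system $\mathbf{B}$ that is not $\mu\odot\nu$, contradicting disjointness.

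I expect the main obstacle to be the last identification step. Construction 3.4 in \cite{D} delivers a joining with a mirror image, which a priori need not be presented as a dual system; converting it into a joining with an honest compact dual system requires exploiting the tracial character of $\mu|_{F}$ together with the group von Neumann algebra structure to flip the relevant anti-isomorphism into an isomorphism that preserves the dual-system form of the dynamics. Everything else is a direct chaining of Theorems 3.4, 3.5, 3.6 with the standard subsystem-to-joining recipe already used in Section 2.
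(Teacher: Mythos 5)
Your argument is correct and rests on the same three pillars as the paper's proof: Theorem 3.4 to see that non-ergodicity forces $E\neq\{1\}$, Theorem 3.5 to produce a nontrivial compact subsystem isomorphic to a dual system, and the diagonal-state joining of \cite[Construction 3.4 and Lemma 3.5]{D} to contradict disjointness. The only real difference is where the opposite-group identification is performed. You take the finite orbit subsystem of $\mathbf{A}$ itself, obtain a joining of $\mathbf{A}$ with its mirror image, and then must convert that mirror image back into an honest dual system by composing $a\mapsto Ja^{\ast}J$ with the anti-automorphism $\lambda(g)\mapsto\lambda(g^{-1})$ of $L(E)$ --- which does work exactly as you sketch, since $J\lambda(g)J$ generates the right regular representation and $g\mapsto g^{-1}$ identifies $E$ with its opposite group. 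The paper front-loads this identification instead: it defines $\tilde{\Gamma}$ as $\Gamma$ with reversed multiplication, observes that the dual system $\mathbf{B}$ of $(\tilde{\Gamma},T)$ lies inside the commutant $A^{\prime}$, takes the finite orbit subsystem of $\mathbf{B}$ (a compact system isomorphic to a dual system by Theorem 3.5, nontrivial because $(\tilde{\Gamma},T)$ has the same orbits as $(\Gamma,T)$), and writes down the diagonal joining $t\mapsto\langle\Omega,\delta(t)\Omega\rangle$ directly. This dissolves the ``main obstacle'' you flag --- no post-hoc untwisting of an anti-isomorphism is needed --- but the underlying mechanism (left versus right regular representation, i.e.\ the opposite group) is identical in both routes, so the two proofs are essentially the same argument organized in opposite orders.
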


\begin{proof}
Define a group $\tilde{\Gamma}$ that consists of the same elements
as $\Gamma$ but with the product $g\cdot h:=hg$. Then $T$ is an
automorphism of $\tilde{\Gamma}$. Let $\mathbf{B}$ be the dual
system of $\left(  \tilde{\Gamma},T\right)  $. It is easily verified
that $B\subset A^{\prime}$. Let $\mathbf{F}$ be the finite orbit
subsystem of $\mathbf{B}$, so in particular $\mathbf{F}$ is compact
and isomorphic to a dual system by Theorem 3.5. Then we see that
$\omega:A\odot F\rightarrow\mathbb{C} :t\mapsto\left\langle
\Omega,\delta(t)\Omega\right\rangle $ is a joining of $\mathbf{A}$
and $\mathbf{F}$ where $\delta:A\odot F\rightarrow B(H)$ is defined
through $\delta(a\otimes b)=ab$. (This is again the ``diagonal
measure'' idea.) Note that as in \cite[Lemma 3.5]{D}, $\omega$ is
trivial (i.e. equal to $\mu\odot\nu$) if and only if $\mathbf{F}$ is
trivial. But since $\left(\tilde{\Gamma},T\right)$ has the same
orbits as $\left(\Gamma,T\right)$, we know from Theorem 3.4 that
$\mathbf{B}$ is ergodic if and only if $\mathbf{A}$ is. Hence, if we
assume that $\mathbf{A}$ is not ergodic, then $\mathbf{F}$ is
nontrivial by Theorem 3.5.
\end{proof}

Proposition 3.7 suggests that the converse of Theorem 3.8 might be true,
however I don't have a proof or a counter example.

\section{Ergodic joinings}

In this section we briefly motivate and study ergodic joinings. We
begin by noting that for our systems $\mathbf{A}$ and $\mathbf{B}$
from Section 1, every state on the (unital) $\ast$-algebra $A\odot
B$ can in fact be extended to a state on the maximal C*-algebraic
tensor product $A\otimes_{m}B$. This is a consequence of the
following proposition pointed out to me by the referee, which is
certainly known, but for which I have no reference.

\begin{proposition}
Let $A$ and $B$ be unital C*-algebras, and $\omega$ any state on
their algebraic tensor product $A\odot B$. Then $\omega$ is bounded
with respect to the maximal C*-norm on $A\odot B$.
\end{proposition}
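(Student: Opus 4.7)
The plan is to run a GNS construction directly on the state $\omega$ over the $*$-algebra $A\odot B$, extract two commuting $*$-representations of $A$ and $B$, and then appeal to the universal property of the maximal C*-tensor product. Concretely, set $\langle x,y\rangle:=\omega(x^{\ast}y)$ on $A\odot B$; positivity of $\omega$ makes this a semi-inner product, and quotienting by the null space $N=\{x:\omega(x^{\ast}x)=0\}$ and completing produces a Hilbert space $H_{\omega}$ on which the class $\xi_{\omega}$ of $1\otimes 1$ is a unit cyclic vector, with $\omega(x)=\langle \xi_{\omega},[x]\rangle\xi_{\omega}$ available via the usual identity once left multiplication has been shown to be bounded.

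The crucial step, and the one place where the C*-algebra hypothesis really enters, is verifying that left multiplication by $a\otimes 1$ on $A\odot B$ descends to a bounded operator on $H_{\omega}$ of norm at most $\|a\|$ (and similarly for $1\otimes b$). Since $A$ is a unital C*-algebra, $\|a\|^{2}\cdot 1_{A}-a^{\ast}a$ is positive and hence equals $c^{\ast}c$ for some $c\in A$. Consequently, for any $x\in A\odot B$,
\[
\omega\bigl(((a\otimes 1)x)^{\ast}(a\otimes 1)x\bigr)=\|a\|^{2}\omega(x^{\ast}x)-\omega\bigl(((c\otimes 1)x)^{\ast}(c\otimes 1)x\bigr)\le \|a\|^{2}\omega(x^{\ast}x),
\]
which simultaneously shows that left multiplication preserves $N$ (so the action on $H_{\omega}$ is well defined) and gives the operator norm bound. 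The upshot is a unital $*$-representation $\pi_{A}:A\to B(H_{\omega})$ and, symmetrically, a unital $*$-representation $\pi_{B}:B\to B(H_{\omega})$.

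Because $(a\otimes 1)(1\otimes b)=(1\otimes b)(a\otimes 1)$ in $A\odot B$, the images of $\pi_{A}$ and $\pi_{B}$ commute, so by the defining universal property of $A\otimes_{m}B$ the $*$-homomorphism $a\otimes b\mapsto \pi_{A}(a)\pi_{B}(b)$ on $A\odot B$ extends to a $*$-representation $\Pi$ of $A\otimes_{m}B$. On $A\odot B$ this $\Pi$ is just the GNS representation of $\omega$, so
\[
|\omega(x)|=|\langle \xi_{\omega},\Pi(x)\xi_{\omega}\rangle|\le \|\Pi(x)\|\le \|x\|_{m}
\]
for every $x\in A\odot B$, which is exactly the desired boundedness. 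The only non-routine ingredient is the norm estimate in the previous paragraph; it would fail for general unital $*$-algebras and depends squarely on the availability of a square root for $\|a\|^{2}1_{A}-a^{\ast}a$ inside a C*-algebra. The remaining pieces — GNS on a $*$-algebra and the universal property of $\otimes_{m}$ — are completely standard.
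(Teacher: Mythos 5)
Your proof is correct, and it reaches the conclusion by a route that differs in a meaningful way from the paper's. Both arguments run the GNS construction on $(A\odot B,\omega)$ and finish with $|\omega(t)|=|\langle \Omega_{\omega},\pi_{\omega}(t)\Omega_{\omega}\rangle|\leq\|\pi_{\omega}(t)\|\leq\|t\|_{m}$; the difference lies in how boundedness of the GNS representation is obtained. You bound only the generators: using the C*-square root of $\|a\|^{2}1_{A}-a^{\ast}a$ you show that left multiplication by $a\otimes 1$ (and by $1\otimes b$) is contractive, obtain commuting unital $\ast$-representations $\pi_{A}$ and $\pi_{B}$, and invoke the universal property of $A\otimes_{m}B$ for commuting pairs. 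The paper instead bounds $\pi_{\omega}(t)$ for an arbitrary $t=\sum_{k=1}^{n}a_{k}\otimes b_{k}$ all at once by verifying Combes' axiom $A_{1}$: it establishes the order inequality $t^{\ast}t\leq n\sum_{k}(a_{k}^{\ast}a_{k})\otimes(b_{k}^{\ast}b_{k})\leq\left(n\sum_{k}\|a_{k}\|^{2}\|b_{k}\|^{2}\right)1_{A}\otimes 1_{B}$ inside the ordered $\ast$-algebra $A\odot B$, deduces $s^{\ast}t^{\ast}ts\leq\lambda_{t}s^{\ast}s$, and then uses the fact that the resulting C*-seminorm $t\mapsto\|\pi_{\omega}(t)\|$ is dominated by the maximal C*-norm. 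Your version yields the sharp contractivity bounds $\|\pi_{A}(a)\|\leq\|a\|$ and leans on the commuting-pair formulation of the universal property; the paper's version stays entirely inside $A\odot B$ with the cruder constant $\lambda_{t}$ and appeals instead to the maximality of $\|\cdot\|_{m}$ among C*-seminorms. These two formulations of maximality are equivalent, so the arguments converge at the final step. One typographical slip: the identity $\omega(x)=\langle\xi_{\omega},[x]\rangle\xi_{\omega}$ should read $\omega(x)=\langle\xi_{\omega},[x]\rangle=\langle\xi_{\omega},\Pi(x)\xi_{\omega}\rangle$; this does not affect the argument.
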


\begin{proof}
In this proof the notation $s\leq t$ for $s,t\in A\odot B$ means
that $t-s$ is a finite sum of terms of the form $u^{\ast}u$ with
$u\in A\odot B$. The proof has two steps.

Firstly we assume the so-called \textit{Axiom} $A_{1}$ of F. Combes
\cite[p. 38]{C}, namely that for every $t\in A\odot B$ there exists
a scalar $\lambda_{t}\geq0$ such that
\[
s^{\ast}t^{\ast}ts\leq\lambda_{t}s^{\ast}s
\]
for all $s\in A\odot B$. We now show that from this assumption it
follows that $\omega$ is bounded with respect to the maximal C*-norm
$\left\| \cdot\right\| _{m}$ on $A\odot B$. Let
$\left(H_{\omega},\pi_{\omega},\Omega_{\omega}\right)$ be the cyclic
representation of $\left(A\odot B,\omega\right)$ obtained from the
GNS construction, so we have a linear $\gamma_{\omega}:A\odot
B\rightarrow H_{\omega}$ such that $\gamma_\omega(A\odot B)$ is
dense in $H_\omega$,
$\langle\gamma_\omega(s),\gamma_\omega(t)\rangle = \omega(s^*t)$,
and
\[
\pi_{\omega}(t)\gamma_{\omega}(s):=\gamma_{\omega}(ts)
\]
for all $s,t\in A\odot B$. Then
\[
\left\|
\pi_{\omega}(t)\gamma_{\omega}(s)\right\|^{2}=\omega\left((ts)^{\ast}ts\right)
\leq\lambda_{t}\omega(s^{\ast}s)=\lambda_{t}\left\|
\gamma_{\omega}(s)\right\| ^{2}
\]
for all $s,t\in A\odot B$, and since $\gamma_{\omega}(A\odot B)$ is
dense in $H_{\omega}$, it follows that $\pi_{\omega}(t)$ can be
extended to a bounded linear operator on $H_{\omega}$. As for the
cyclic representation of a state on a C*-algebra,
$\pi_{\omega}:A\odot B\rightarrow B(H_{\omega})$ is a
$\ast$-homomorphism, for example $\left\langle
\pi_{\omega}(t)^{\ast}\gamma
_{\omega}(u),\gamma_{\omega}(s)\right\rangle =\left\langle
\gamma_{\omega }(u),\gamma_{\omega}(ts)\right\rangle
=\omega(u^{\ast}ts)=\overline
{\omega(s^{\ast}t^{\ast}u)}=\overline{\left\langle \gamma_{\omega}
(s),\pi_{\omega}(t^{\ast})\gamma_{\omega}(u)\right\rangle }$ from
which $\pi_{\omega}(t^{\ast})=\pi_{\omega}(t)^{\ast}$ follows. This
implies that $t\mapsto\left\|  \pi_{\omega}(t)\right\|  $ is a
C*-seminorm on $A\odot B$, and therefore $\left\|
\pi_{\omega}(t)\right\|  \leq\left\|  t\right\|  _{m}$ for $t\in
A\odot B$; see \cite[p. 193]{M} for example. But using the cyclic
representation we then have
\[
\left|  \omega(t)\right|  =\left|  \left\langle
\Omega_{\omega},\pi_{\omega }(t)\Omega_{\omega}\right\rangle \right|
\leq\left\| \pi_{\omega }(t)\right\| \leq\left\| t\right\| _{m}
\]
and hence $\omega$ is bounded with respect to the maximal C*-norm as
required.

Secondly we show that Combes' axiom is indeed satisfied, and this
will complete the proof. Note that for $0\leq a\in A$ and $0\leq
b\in B$ we have $a\otimes b=\left(  a^{1/2}\otimes b^{1/2}\right)
^{\ast}\left( a^{1/2}\otimes b^{1/2}\right)  \geq0$. For $a_{2}\geq
a_{1}$ in $A$ and $b_{2}\geq b_{1}$ in $B$ it follows that
$a_{2}\otimes b_{2}-a_{1}\otimes
b_{1}=(a_{2}-a_{1})\otimes(b_{2}-b_{1})+a_{1}\otimes(b_{2}-b_{1})+(a_{2}-a_{1})\otimes
b_{1}\geq0$, hence
\[
a_{2}\otimes b_{2}\geq a_{1}\otimes b_{1}.
\]
For an arbitrary $t=\sum_{k=1}^{n}a_{k}\otimes b_{k}\in A\odot B$ it
follows from \cite[Inequality 8.5]{NSZ}, the inequality above, and
the fact that for $c\geq0$ in a unital C*-algebra one has
$c\leq\left\|  c\right\|  $, that
\[
t^{\ast}t\leq
n\sum_{k=1}^{n}(a_{k}^{\ast}a_{k})\otimes(b_{k}^{\ast}b_{k})\leq\left(n\sum_{k=1}^{n}\left\|
a_{k}\right\| ^{2}\left\| b_{k}\right\| ^{2}\right)
1_{A}\otimes1_{B}
\]
hence Combes' axiom holds with $\lambda_{t}=n\sum_{k=1}^{n}\left\|
a_{k}\right\| ^{2}\left\|  b_{k}\right\| ^{2}$.
\end{proof}

In particular for any systems $\mathbf{A}$ and $\mathbf{B}$ it
follows that in effect every element $\omega$ of
$J(\mathbf{A},\mathbf{B})$ is a state on $A\otimes_{m}B$, and by
continuity we have $\omega\circ\left(  \alpha
_{g}\otimes_{m}\beta_{g}\right)  =\omega$ for all $g\in G$. In the
rest of this section we work in terms of this setting.

\begin{proposition}
The set $J\left( \mathbf{A},\mathbf{B}\right)$ is weakly* compact,
and it is the closed convex hull of its extreme points. In
particular this set of extreme points, which we will denote by
$J_{e}\left(\mathbf{A},\mathbf{B}\right)$, is not empty.
\end{proposition}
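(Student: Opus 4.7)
The plan is to view $J(\mathbf{A},\mathbf{B})$ as a subset of the state space $S$ of the unital C*-algebra $A\otimes_{m}B$, which by Proposition 4.1 is legitimate: every joining is (uniquely) a state on $A\otimes_{m}B$, and the defining conditions of a joining automatically extend by continuity. Recall that $S$ is weakly* compact by \cite[Theorem 2.3.15]{BR} (as already cited in Proposition 2.3).

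First I would verify that $J(\mathbf{A},\mathbf{B})$ is weakly* closed in $S$. A state $\omega\in S$ lies in $J(\mathbf{A},\mathbf{B})$ exactly when it satisfies the linear conditions $\omega(a\otimes 1_{B})=\mu(a)$ for all $a\in A$, $\omega(1_{A}\otimes b)=\nu(b)$ for all $b\in B$, and $\omega\bigl((\alpha_{g}\otimes_{m}\beta_{g})(t)\bigr)=\omega(t)$ for all $g\in G$ and all $t$ in a norm-dense set in $A\otimes_{m}B$ (e.g. $A\odot B$). Each of these is a closed condition in the weak* topology, being the intersection of preimages of closed sets under weak*-continuous evaluation functionals. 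Hence $J(\mathbf{A},\mathbf{B})$ is weakly* closed in $S$ and therefore weakly* compact.

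Next I would observe that $J(\mathbf{A},\mathbf{B})$ is convex: for $\omega_{1},\omega_{2}\in J(\mathbf{A},\mathbf{B})$ and $\lambda\in[0,1]$, the convex combination $\lambda\omega_{1}+(1-\lambda)\omega_{2}$ is again a state satisfying all the marginal and invariance constraints, since these constraints are linear. Moreover $J(\mathbf{A},\mathbf{B})$ is non-empty because it contains the product joining $\mu\otimes_{m}\nu$.

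Finally, I would invoke the Krein--Milman theorem: the weak* topology turns the dual $(A\otimes_{m}B)^{\ast}$ into a locally convex Hausdorff topological vector space, and $J(\mathbf{A},\mathbf{B})$ is a non-empty weakly* compact convex subset. By Krein--Milman it equals the weakly* closed convex hull of its extreme points, and in particular the set of extreme points $J_{e}(\mathbf{A},\mathbf{B})$ is non-empty. No step presents a genuine obstacle; the only item requiring care is the passage from $A\odot B$ to $A\otimes_{m}B$, which is precisely what Proposition 4.1 supplies.
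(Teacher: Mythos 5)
Your proposal is correct and follows essentially the same route as the paper: realize $J(\mathbf{A},\mathbf{B})$ as a weakly* closed (hence compact) convex subset of the state space of $A\otimes_{m}B$ via Proposition 4.1, note it is non-empty because it contains $\mu\otimes_{m}\nu$, and apply Krein--Milman. The paper's proof is just a terser version of yours, leaving the closedness and convexity checks to the reader.
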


\begin{proof}
Let $S$ be the set of states on $A\otimes_{m}B$. Since $S$ is
weakly* compact, and it is readily verified that
$J\left(\mathbf{A},\mathbf{B}\right)$ is weakly* closed in $S$, it
follows that $J\left(\mathbf{A},\mathbf{B}\right)$ is weakly*
compact. It is easy to see $J\left(\mathbf{A},\mathbf{B}\right)$ is
convex. Since $\mu\otimes _{m}\nu\in
J\left(\mathbf{A},\mathbf{B}\right)  $, it follows from the
Krein-Milman theorem that $J_{e}\left(\mathbf{A},\mathbf{B}\right)$
is not empty and that $J\left(\mathbf{A},\mathbf{B}\right)$ is the
closed convex hull of $J_{e}\left(\mathbf{A},\mathbf{B}\right)$.
\end{proof}

\begin{definition}
A \textit{C*-dynamical system} $\left(C,\tau\right)$ consists of a
unital C*-algebra $C$ and a representation
$\tau:G\rightarrow$Aut$(C):g\mapsto\tau_{g}$ of a group $G$. Let
$E_{\tau}$ denote the extreme points of the set of $\tau$-invariant
states on $C$.
\end{definition}

This set of extreme points connects to ergodicity (in the sense that
we have been using the term) in the following way: By \cite[Theorem
4.3.20]{BR} a W*-dynamical system is ergodic if and only if the
unitary representation of its dynamics on its GNS Hilbert space has
a one-dimensional fixed point space. On the other hand, according to
\cite[Theorem 4.3.17]{BR}, if the triple $\left(C,\rho,\tau\right)$
is $G$-abelian (see \cite[Definition 4.3.6]{BR}) for some
$\tau$-invariant state $\rho$ on $C$, then $\rho\in E_{\tau}$ if and
only if the unitary representation of $\tau$ on the GNS Hilbert
space of $\left(C,\rho\right)$ has a one-dimensional fixed point
space. So in this case we can say that every element $\rho$ of
$E_{\tau}$ gives us an ergodic C*-dynamical system of the form
$\left(C,\rho,\tau\right)$. The latter will appear again in Section
5, but without the assumption that it is $G$-abelian.

\begin{proposition}
If $\mathbf{A}$ and $\mathbf{B}$ are ergodic, then
$J_{e}\left(\mathbf{A},\mathbf{B}\right) \subset
E_{\alpha\otimes_{m}\beta}$ where
$\left(\alpha\otimes_{m}\beta\right)_{g}:=\alpha_{g}\otimes_{m}\beta_{g}$.
\end{proposition}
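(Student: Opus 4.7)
The plan is to show that any decomposition of $\omega\in J_{e}(\mathbf{A},\mathbf{B})$ as a convex combination of $(\alpha\otimes_{m}\beta)$-invariant states on $A\otimes_{m}B$ is automatically a decomposition inside $J(\mathbf{A},\mathbf{B})$, so that extremality of $\omega$ there forces the decomposition to be trivial.

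Concretely, suppose $\omega=\lambda\omega_{1}+(1-\lambda)\omega_{2}$ with $0<\lambda<1$ and each $\omega_{i}$ an $(\alpha\otimes_{m}\beta)$-invariant state on $A\otimes_{m}B$. Define the marginals $\mu_{i}(a):=\omega_{i}(a\otimes 1_{B})$ and $\nu_{i}(b):=\omega_{i}(1_{A}\otimes b)$; these are $\alpha$- and $\beta$-invariant states on $A$ and $B$ respectively (invariance follows from $(\alpha_{g}\otimes_{m}\beta_{g})(a\otimes 1_{B})=\alpha_{g}(a)\otimes 1_{B}$), and $\mu=\lambda\mu_{1}+(1-\lambda)\mu_{2}$, $\nu=\lambda\nu_{1}+(1-\lambda)\nu_{2}$. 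It will suffice to establish $\mu_{i}=\mu$ and $\nu_{i}=\nu$, for then $\omega_{i}\in J(\mathbf{A},\mathbf{B})$ and extremality of $\omega$ in $J(\mathbf{A},\mathbf{B})$ gives $\omega_{1}=\omega_{2}=\omega$.

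The substantive step is $\mu_{i}=\mu$ (the argument for $\nu_{i}$ is symmetric). Since $\lambda\mu_{i}\leq\mu$ as positive linear functionals and $\mu$ is normal, a standard sandwich argument with bounded increasing nets shows $\mu_{i}$ is itself normal. Let $(H_{\mu},\pi_{\mu},\Omega_{\mu})$ be the GNS construction for $(A,\mu)$ and $U$ the associated unitary implementation of $\alpha$, with $U_{g}\Omega_{\mu}=\Omega_{\mu}$. A Radon-Nikodym-type construction on the sesquilinear form $(\pi_{\mu}(a)\Omega_{\mu},\pi_{\mu}(b)\Omega_{\mu})\mapsto\mu_{i}(b^{\ast}a)$, well defined and bounded because $\mu_{i}\leq\lambda^{-1}\mu$ together with Cauchy--Schwarz, yields a positive $T_{i}\in\pi_{\mu}(A)^{\prime}$ with $\mu_{i}(a)=\langle\Omega_{\mu},T_{i}\pi_{\mu}(a)\Omega_{\mu}\rangle$. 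The invariance identity $\mu_{i}\circ\alpha_{g}=\mu_{i}$ together with $\pi_{\mu}(\alpha_{g}(a))\Omega_{\mu}=U_{g}\pi_{\mu}(a)\Omega_{\mu}$ and the density of $\pi_{\mu}(A)\Omega_{\mu}$ in $H_{\mu}$ then force $U_{g}T_{i}\Omega_{\mu}=T_{i}\Omega_{\mu}$ for every $g\in G$. By ergodicity of $\mathbf{A}$ together with \cite[Theorem 4.3.20]{BR}, the fixed-point space of $U$ is $\mathbb{C}\Omega_{\mu}$, so $T_{i}\Omega_{\mu}=c_{i}\Omega_{\mu}$ for some scalar $c_{i}$; then $\mu_{i}(a)=c_{i}\mu(a)$, and $\mu_{i}(1_{A})=1$ forces $c_{i}=1$, so $\mu_{i}=\mu$.

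The main technical point to watch will be the Radon--Nikodym step. Since $\omega_{i}$ need not be normal on $A\otimes_{m}B$, one must first upgrade its marginals to normal states on $A$ and $B$ via the domination by $\mu$ and $\nu$; only then does the commutant Radon--Nikodym representation apply. After that, the ergodicity hypothesis plugs in cleanly through \cite[Theorem 4.3.20]{BR} and the remaining bookkeeping is formal.
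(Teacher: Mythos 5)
Your proposal is correct and follows essentially the same route as the paper's proof: decompose $\omega\in J_{e}(\mathbf{A},\mathbf{B})$ as a convex combination of $(\alpha\otimes_{m}\beta)$-invariant states, note that the marginals then decompose $\mu$ and $\nu$ as convex combinations of invariant states, use ergodicity to force the marginals to equal $\mu$ and $\nu$ so that each $\omega_{i}$ lies in $J(\mathbf{A},\mathbf{B})$, and finish by extremality of $\omega$ there. The only difference is that where the paper simply cites \cite[Theorem 4.3.17]{BR} for the fact that ergodicity gives $\mu\in E_{\alpha}$, you prove this step directly (normality of the dominated marginal plus a commutant Radon--Nikodym operator whose image of $\Omega_{\mu}$ is a $U$-fixed vector), which is a correct and self-contained substitute.
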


\begin{proof}
Since $\mathbf{A}$ and $\mathbf{B}$ are ergodic, we have $\mu\in
E_{\alpha}$ and $\nu\in E_{\beta}$; see for example \cite[Theorem
4.3.17]{BR}. Now consider any $\omega\in
J_{e}\left(\mathbf{A},\mathbf{B}\right)$ and write
$\omega=r\omega_{1}+(1-r)\omega_{2}$ where $\omega_{1}$ and
$\omega_{2}$ are states invariant under $\alpha\otimes _{m}\beta$,
and $0<r<1$. Then $\mu=\omega\left( \cdot\otimes1_{B}\right)
=r\omega_{1}\left( \cdot\otimes1_{B}\right) +(1-r)\omega_{2}\left(
\cdot\otimes1_{B}\right)  $, but $\mu\in E_{\alpha}$, hence
$\mu=\omega _{j}\left(  \cdot\otimes1_{B}\right) $ and likewise
$\nu=\omega_{j}\left(1_{A}\otimes\cdot\right)$. Thus $\omega_{j}\in
J\left(\mathbf{A},\mathbf{B}\right)$, but $\omega$ is extremal in
the latter set, therefore $\omega=\omega_{j}$. This shows that
$\omega\in E_{\alpha\otimes_{m}\beta}$.
\end{proof}

This proposition motivates the term \textit{ergodic joining} (of
$\mathbf{A}$ and $\mathbf{B}$) for each element of
$J_{e}\left(\mathbf{A} ,\mathbf{B}\right)$ when $\mathbf{A}$ and
$\mathbf{B}$ are both ergodic.

We end this section with another illustration of how joinings can in principle
be used, by proving a Halmos-von Neumann type theorem for W*-dynamical systems
in terms of Hilbert space. Unfortunately we require a form of asymptotic
abelianness defined as follows:

\begin{definition}
Consider a C*-dynamical system $\left(C,\tau\right)$ whose group $G$
is countable, discrete and amenable. Let $\left(\Lambda_{n}\right)$
be any F\o lner sequence in $G$. If
\begin{equation}
\lim_{n\rightarrow\infty}\frac{1}{\left|  \Lambda_{n}\right|  }\sum
_{g\in\Lambda_{n}}\left\|  \left[  a,\tau_{g}(b)\right]  \right\|
=0 \tag{4.1}
\end{equation}
for all $a,b\in C$ where $\left[\cdot,\cdot\right]$ is the
commutator, then we say $\left(C,\tau\right)$ is
$\left(\Lambda_{n}\right)$\textit{-asymptotically abelian}.
\end{definition}

This type of asymptotic abelianness was also used in \cite{NSZ} for the case
$G=\mathbb{Z}$. We will not in fact need any properties of F\o lner sequences;
we will only use (4.1), for example it does not matter if $\left(  \Lambda
_{n}\right)  $ is a right or left F\o lner sequence.

\begin{proposition}
Let $\mathbf{A}$ and $\mathbf{B}$ be ergodic,
$\left(\Lambda_{n}\right)$-asymptotically abelian and have the same
point spectrum, i.e. $\sigma_{\mathbf{A}}=\sigma_{\mathbf{B}}$. Then
the unitary representations $U$ and $V$ of $\alpha$ and $\beta$
respectively (as in Definition 2.5) can be done on Hilbert subspaces
of some Hilbert space, such that the eigenvectors of $U$ and $V$
span the same Hilbert subspace, say $H_{0}$, and such that
$U_{g}x=V_{g}x$ for all $x\in H_{0}$ and $g\in G$.
\end{proposition}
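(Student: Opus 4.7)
The plan is to realize both $U$ and $V$ inside the GNS Hilbert space of a single ergodic joining, and then use ergodicity of that joining together with asymptotic abelianness to force the two eigenvector systems to coincide.

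By Proposition 4.2 pick an ergodic joining $\omega\in J_e(\mathbf{A},\mathbf{B})$; by Proposition 4.1 it is a state on $A\otimes_m B$. Let $(H_\omega,\pi_\omega,\Omega_\omega)$ be its GNS triple. Exactly as in the construction used in the proof of Theorem~2.1, $H_\mu$ and $H_\nu$ embed as closed subspaces of $H_\omega$ via $\gamma_\mu(a)\mapsto\pi_\omega(a\otimes1_B)\Omega_\omega$ and $\gamma_\nu(b)\mapsto\pi_\omega(1_A\otimes b)\Omega_\omega$, and there is a unitary representation $W$ of $G$ on $H_\omega$ implementing $\alpha_g\otimes_m\beta_g$, whose restrictions to $H_\mu$ and $H_\nu$ are $U_g$ and $V_g$ respectively.

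A short commutator expansion on simple tensors, followed by a density argument, shows that $(A\otimes_m B,\alpha\otimes_m\beta)$ inherits $(\Lambda_n)$-asymptotic abelianness from $\mathbf{A}$ and $\mathbf{B}$; hence $(A\otimes_m B,\omega,\alpha\otimes_m\beta)$ is $G$-abelian in the sense of \cite[Definition 4.3.6]{BR}. Since $\omega\in E_{\alpha\otimes_m\beta}$ by Proposition 4.4, \cite[Theorems 4.3.17 and 4.3.20]{BR} together imply that the fixed-point algebra of $\tilde\alpha_g:=W_g(\,\cdot\,)W_g^*$ inside $\pi_\omega(A\otimes_m B)''$ equals $\mathbb{C}I$. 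This is the key place where asymptotic abelianness is actually used.

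Now fix any $\chi\in\sigma_\mathbf{A}=\sigma_\mathbf{B}$ and, via \cite[Theorem 2.5]{S}, pick nonzero eigenoperators $a\in A$ and $b\in B$ with $\alpha_g(a)=\chi(g)a$ and $\beta_g(b)=\chi(g)b$. Since $|\chi(g)|=1$, both $a\otimes b^*$ and $1\otimes b^*b$ are $(\alpha\otimes_m\beta)$-invariant, so by the previous paragraph $\pi_\omega(a\otimes b^*)=cI$ for some $c\in\mathbb{C}$ and $\pi_\omega(1\otimes b^*b)=\|\gamma_\nu(b)\|^2\,I$. Applying the first operator to $\gamma_\nu(b)=\pi_\omega(1\otimes b)\Omega_\omega$, and using the second to evaluate $\pi_\omega(1\otimes b^*b)\Omega_\omega=\|\gamma_\nu(b)\|^2\Omega_\omega$, yields
\[
c\,\gamma_\nu(b) \;=\; \pi_\omega(a\otimes b^*)\,\gamma_\nu(b) \;=\; \|\gamma_\nu(b)\|^2\,\gamma_\mu(a),
\]
so $\gamma_\mu(a)$ and $\gamma_\nu(b)$ are nonzero scalar multiples of each other in $H_\omega$. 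This simultaneously forces each $\chi$-eigenspace of $U$ and of $V$ to be one-dimensional and makes the two resulting lines in $H_\omega$ coincide. Taking the closed span over $\chi\in\sigma_\mathbf{A}$ produces a common subspace $H_0\subset H_\omega$, and since $U_g=W_g|_{H_\mu}$ and $V_g=W_g|_{H_\nu}$ both restrict on $H_0$ to $W_g|_{H_0}$, we conclude $U_gx=V_gx$ for all $x\in H_0$ and $g\in G$. The main obstacle is really obtaining enough W*-ergodicity on $\pi_\omega(A\otimes_m B)''$ to conclude $\pi_\omega(a\otimes b^*)\in\mathbb{C}I$; once that is in place, the eigenvector-matching identity above is essentially automatic.
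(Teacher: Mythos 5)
Your overall strategy coincides with the paper's: take an ergodic joining $\omega\in J_e(\mathbf{A},\mathbf{B})$, use asymptotic abelianness to make $\left(A\otimes_m B,\omega\right)$ $G$-abelian, and match eigenvectors inside $H_\omega$ by exploiting the fact that $a\otimes b^{\ast}$ is $(\alpha\otimes_m\beta)$-invariant when $a$ and $b$ are eigenoperators for the same eigenvalue. The gap is in how you extract information from that invariance. You claim that \cite[Theorems 4.3.17 and 4.3.20]{BR} give triviality of the fixed-point \emph{algebra} of $W_g(\cdot)W_g^{\ast}$ in $\pi_\omega(A\otimes_m B)''$, and then use the operator identity $\pi_\omega(a\otimes b^{\ast})=cI$. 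What $G$-abelianness plus $\omega\in E_{\alpha\otimes_m\beta}$ actually yields (Theorem 4.3.17) is that the fixed-point \emph{subspace} of $W$ in $H_\omega$ is $\mathbb{C}\Omega_\omega$. Upgrading ``$T$ is $\mathrm{Ad}W$-invariant, hence $T\Omega_\omega\in\mathbb{C}\Omega_\omega$'' to ``$T\in\mathbb{C}I$'' requires $\Omega_\omega$ to be separating for $\pi_\omega(A\otimes_m B)''$, and Theorem 4.3.20 is only available for a faithful normal state; a joining provides no such faithfulness. Indeed the claim fails for natural examples within the hypotheses: for the diagonal joining $\mu_{\triangle}$ of $\mathbf{A}$ with $\mathbf{\tilde{A}}$ (which is extremal when $\mathbf{A}$ is ergodic and the system is $G$-abelian, since its GNS space is $H_\mu$ with unitary representation $U$), one has $\pi_{\omega}(A\otimes_m\tilde{A})''=B(H_\mu)$, whose $\mathrm{Ad}U$-fixed subalgebra is the commutant of $U_G$ and contains every spectral projection of $U$; concretely $\pi(u)J\pi(u)J$ is an invariant non-scalar whenever $u$ is a non-central unitary eigenoperator.

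The repair is exactly the paper's move: work with vectors rather than operators. Since $\gamma_\omega(a\otimes b^{\ast})=\pi_\omega(a\otimes b^{\ast})\Omega_\omega$ is a $W$-fixed vector, Theorem 4.3.17 legitimately gives $\gamma_\omega(a\otimes b^{\ast})=c\Omega_\omega$; applying $\pi_\omega(1_A\otimes b)$ and using $bb^{\ast}=\nu(bb^{\ast})1_B$ (ergodicity of $\mathbf{B}$) yields $\nu(bb^{\ast})\gamma_\mu(a)=c\gamma_\nu(b)$ with $\nu(bb^{\ast})\neq 0$, which is your proportionality statement. (The paper's version starts from $\gamma_\omega(a^{\ast}\otimes b)=c\Omega_\omega$, applies $\pi_\omega(a\otimes 1_B)$ and uses $aa^{\ast}=\mu(aa^{\ast})1_A$.) The remainder of your argument --- one-dimensional eigenspaces by ergodicity, taking closed spans, and $U_g=W_g|_{H_\mu}$, $V_g=W_g|_{H_\nu}$ --- is sound.
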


\begin{proof}
We follow the basic plan due to Lema\'{n}czyk \cite{L} (also see
\cite[Theorem 7.1]{G}) for the measure theoretic case. By
Proposition 4.2 there exists an $\omega\in
J_{e}\left(\mathbf{A},\mathbf{B}\right)$. Note furthermore that
$\left(A\otimes_{m}B,\alpha\otimes_{m}\beta\right)$ is
$\left(\Lambda_{n}\right)$-asymptotically abelian, and hence it is
easy to see that the pair $\left(  A\otimes_{m}B,\omega\right)$ is
$G$-abelian (see \cite[Definition 4.3.6]{BR}). Now consider the
``combined'' GNS construction for
$\left(A\otimes_{m}B,\omega\right)$, $\left( A,\mu\right)$ and
$\left(B,\nu\right)$ as given by \cite[Construction 2.3]{D}, namely
$\left( H_{\omega},\gamma_{\omega}\right)$,
$\left(H_{\mu},\gamma_{\mu}\right)$ and
$\left(H_{\nu},\gamma_{\nu}\right)$, and the corresponding unitary
representations $W$, $U$ and $V$ of $\alpha\otimes_{m}\beta$,
$\alpha$ and $\beta$ respectively. From
$\left(H_{\omega},\gamma_{\omega}\right)$ and
$\left(H_{\mu},\gamma_{\mu}\right) $ we of course also obtain the
respective cyclic representations with common cyclic vector:
$\left(H_{\omega},\pi_{\omega},\Omega_{\omega}\right)$ and
$\left(H_{\mu},\pi_{\mu},\Omega_{\omega}\right)$.

Take any $\chi\in\sigma_{\mathbf{A}}=\sigma_{\mathbf{B}}$ then by
\cite[Theorem 2.5]{S} the corresponding eigenvectors of $U$ and $V$ are of the
form $\gamma_{\mu}(a)$ and $\gamma_{\nu}(b)$ for some $a\in A$ and $b\in B$,
and furthermore $\alpha_{g}(a)=\chi(g)a$ and $\beta_{g}(b)=\chi(g)b$. Hence
\[
W_{g}\gamma_{\omega}\left(  a^{\ast}\otimes b\right)
=\gamma_{\omega}\left(\alpha_{g}(a)^{\ast}\otimes\beta_{g}(b)\right)
=\gamma_{\omega}\left( a^{\ast}\otimes b\right)
\]
since $\left|  \chi\right|  =1$. Therefore
$\gamma_{\omega}\left(a^{\ast}\otimes b\right)  =c\Omega_{\omega}$
for some $c\in\mathbb{C}$ by \cite[Theorem 4.3.17]{BR} (which uses
above mentioned $G$-abelianness). So
\[
c\gamma_{\mu}(a)=c\pi_{\mu}(a)\Omega_{\omega}=\pi_{\omega}\left(
a\otimes1_{B}\right)  \pi_{\omega}\left(  a^{\ast}\otimes b\right)
\Omega_{\omega}=\gamma_{\omega}\left(  (aa^{\ast})\otimes b\right)
=d\gamma_{\nu}(b)
\]
for some $d\in\mathbb{C}\backslash\{0\}$, since
$\alpha_{g}(aa^{\ast})=\left| \chi(g)\right|
^{2}aa^{\ast}=aa^{\ast}\neq0$ and $\mathbf{A}$ is ergodic. We
conclude that $\gamma_{\mu}(a)$ and $\gamma_{\nu}(b)$ are
proportional, and therefore the eigenvectors of $U$ and $V$ span the
same Hilbert subspace $H_{0}$ of $H_{\omega}$. Lastly, for any $x\in
H_{0}$, we have $U_{g}x=W_{g}x=V_{g}x$ by \cite[Construction
2.3]{D}.
\end{proof}

That some form of asymptotic abelianness should be necessary is
perhaps not surprising (see \cite[Remark 2.7]{S}), however it would
probably be desirable to rather have a version of Proposition 4.6
for C*-dynamical systems (with an invariant state).

\section{Strong mixing}

Throughout this section we consider the situation in Definition 2.2, but with
$G=\mathbb{Z}$. Remember that as in the special case in Section 3,
$\mathbf{A}$ is \textit{strongly mixing} when
\[
\lim_{n\rightarrow\infty}\mu\left(  \alpha_{n}(a)b\right)  =\mu(a)\mu(b)
\]
for all $a,b\in A$. Let
$\mathbf{\tilde{A}=}\left(\tilde{A},\tilde{\mu},\tilde{\alpha}\right)$
be the ``mirror image'' of $\mathbf{A}$ which we referred to after
Definition 2.2. It turns out that
$\tilde{\alpha}_{n}(b)=U_{n}bU_{n}^{\ast}$ for all $b\in\tilde{A}$
with $U$ as in Definition 2.5; see \cite[Construction 3.4]{D} for
more details. Then one can define a joining $\Delta_{n}$ of
$\mathbf{A}$ and $\mathbf{\tilde{A}}$ for every $n$ by
\[
\Delta_{n}(a\otimes b):=\mu_{\bigtriangleup}\left(  \alpha_{n}(a)\otimes
b\right)
\]
for all $a\in A$ and $b\in\tilde{A}$. It is easy to verify that $\Delta_{n}$
is indeed a joining, and in particular $\mu_{\bigtriangleup}=\Delta_{0}$ is a
joining. This joining is an example of what in measure theoretic ergodic
theory is called a \textit{graph joining} (see for example \cite[Examples
6.3]{G} or \cite[Section 2.2]{dlR}). We then have the following simple joining
characterization of strong mixing:

\begin{proposition}
The system $\mathbf{A}$ is strongly mixing if and only if
\begin{equation}
\lim_{n\rightarrow\infty}\Delta_{n}(a\otimes b)=\mu(a)\tilde{\mu}(b)
\tag{5.1}
\end{equation}
for all $a\in A$ and $b\in\tilde{A}$.
\end{proposition}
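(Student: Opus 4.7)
The plan is to reduce both sides of the claimed equivalence to a single Hilbert-space statement, namely the weak operator convergence $U_n\to\Omega\otimes\Omega$ on $H$, where as in Corollary 2.4 we write $\Omega\otimes\Omega$ for the rank-one projection $x\mapsto\Omega\langle\Omega,x\rangle$ onto $\mathbb{C}\Omega$.

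First I would translate strong mixing into Hilbert-space form. Taking adjoints shows that strong mixing is equivalent to $\mu(b\alpha_n(a))\to\mu(a)\mu(b)$ for all $a,b\in A$, and the cyclic representation together with $\pi(\alpha_n(a))\Omega = U_n\pi(a)\Omega$ gives
\[
\mu(b\alpha_n(a)) = \langle \pi(b^{*})\Omega,\, U_n\pi(a)\Omega\rangle,\qquad \mu(a)\mu(b) = \langle\pi(b^{*})\Omega,\Omega\rangle\langle\Omega,\pi(a)\Omega\rangle.
\]
Since $\mu$ is faithful and normal, $\pi(A)\Omega$ is dense in $H$; combined with the uniform bound $\|U_n\|=1$, a routine $3\varepsilon$-argument yields that strong mixing is equivalent to $\langle x, U_n y\rangle\to\langle x,\Omega\rangle\langle\Omega, y\rangle$ for all $x,y\in H$.

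Next I would do the same for condition (5.1). Unpacking Definition 2.2, $\Delta_n(a\otimes b) = \mu_{\bigtriangleup}(\alpha_n(a)\otimes b) = \langle\Omega,\pi(\alpha_n(a))b\Omega\rangle$; since $b\in\tilde{A}=\pi(A)'$ commutes with $\pi(\alpha_n(a))$, this equals $\langle b^{*}\Omega,\pi(\alpha_n(a))\Omega\rangle = \langle b^{*}\Omega,\, U_n\pi(a)\Omega\rangle$. Likewise $\mu(a)\tilde{\mu}(b) = \langle\Omega,\pi(a)\Omega\rangle\langle\Omega,b\Omega\rangle = \langle b^{*}\Omega,\Omega\rangle\langle\Omega,\pi(a)\Omega\rangle$. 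So (5.1) says
\[
\langle b^{*}\Omega,\, U_n\pi(a)\Omega\rangle \to \langle b^{*}\Omega,\Omega\rangle\langle\Omega,\pi(a)\Omega\rangle
\]
for all $a\in A$ and $b\in\tilde{A}$. Because $\Omega$ is separating for $\pi(A)$, it is cyclic for $\tilde{A}=\pi(A)'$, so $\tilde{A}\Omega$ is dense in $H$; together with the density of $\pi(A)\Omega$ and the bound $\|U_n\|=1$, (5.1) is also equivalent to the weak convergence $\langle x, U_n y\rangle\to\langle x,\Omega\rangle\langle\Omega, y\rangle$ for all $x,y\in H$. Both conditions being equivalent to the same Hilbert-space statement, the proposition follows.

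The only point requiring more than bookkeeping is the commutation step $\pi(\alpha_n(a))b\Omega = b\pi(\alpha_n(a))\Omega$, which is what lets the joining $\Delta_n$ realise the matrix coefficients of $U_n$ between the two dense subspaces $\tilde{A}\Omega$ and $\pi(A)\Omega$; this is precisely why the joining with the \emph{mirror image} $\mathbf{\tilde{A}}$ rather than with $\mathbf{A}$ itself yields such a clean characterisation of strong mixing.
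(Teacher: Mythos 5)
Your proof is correct and follows essentially the same route as the paper: the paper's (very terse) proof likewise observes that strong mixing is equivalent to the weak operator convergence $\langle U_{n}x,y\rangle\rightarrow\langle x,\Omega\rangle\langle\Omega,y\rangle$ on $H$, and that this in turn is equivalent to (5.1) because $\tilde{A}\Omega$ is dense in $H$. You have merely filled in the details the paper leaves implicit (the adjoint step, the commutation $b\in\pi(A)'$, and the $3\varepsilon$-argument using $\Vert U_{n}\Vert=1$), all of which are accurate.
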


\begin{proof}
The system $\mathbf{A}$ is strongly mixing if and only if
$\lim_{n\rightarrow \infty}\left\langle U_{n}x,y\right\rangle
=\left\langle x,\Omega\right\rangle \left\langle
\Omega,y\right\rangle $ for all $x,y\in H$, but in turn this is
equivalent to (5.1), since $\tilde{A}\Omega$ is dense in $H$.
\end{proof}

We can also view (5.1) as saying that the sequence $\left(  \Delta_{n}\right)
$ of joinings converges pointwise to the joining $\mu\odot\tilde{\mu}$.

Next we are going to use this result to prove a version of
Ornstein's criterion for strong mixing (in the measure theoretic
setting) \cite[Theorem 2.1]{O} for W*-dynamical systems. Its worth
mentioning that although Ornstein's paper \cite{O} doesn't
explicitly deal with joinings, it did lead to Rudolph's seminal work
\cite{Rud} on joinings and both papers have been very influential in
further developments in classical ergodic theory.

But first we need the following:

\begin{lemma}
Consider a system $\mathbf{A}$ which is not weakly mixing, but with
$\left( A,\mu,\alpha_{n}\right)  $ ergodic for every
$n\in\mathbb{N}$ (the action of $\mathbb{Z}$ in this case is given
by $j\mapsto\left(  \alpha_{n}\right) ^{j}$). Then for every $k>0$
there exists a projection $P\in A$, left fixed by the modular
automorphism group associated with $\mu$, such that $0<\mu(P)<1/k$,
$P\alpha_{n}(P)=\alpha_{n}(P)P$ for all $n$, and
\begin{equation}
\limsup_{n\rightarrow\infty}\mu\left(\alpha_{n}(P)P\right) >k\mu
(P)^{2},\tag{5.2}
\end{equation}
or equivalently,
\begin{equation}
\limsup_{n\rightarrow\infty}\Delta_{n}\left(P\otimes(J\pi(P)J)\right)
>k\left(\mu\odot\tilde{\mu}\right)\left(P\otimes(J\pi(P)J)\right)
\tag{5.3}
\end{equation}
where $J$ is the modular conjugation associated with
$\left(\pi(A),\Omega\right)$.
\end{lemma}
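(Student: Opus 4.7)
The plan is to take $P$ to be a spectral projection of a unitary eigenoperator for $\alpha$ whose eigenvalue is not a root of unity. Since $\mathbf{A}$ is ergodic (the case $n=1$) but not weakly mixing, $\dim H_{0}\geq 2$ while the $1$-eigenspace of $U$ equals $\mathbb{C}\Omega$ by \cite[Theorem 4.3.20]{BR}; hence there exists an eigenvector $x$ of $U$ with eigenvalue $\chi\neq 1$. If $\chi^{n}=1$ for some $n\geq 1$, then $x$ would be a non-scalar fixed vector of $U_{n}$, contradicting the ergodicity of $\alpha_{n}$, so $\chi$ is not a root of unity and $\{\chi^{n}:n\in\mathbb{Z}\}$ is dense in $\mathbb{T}$. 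Applying \cite[Theorem 2.5]{S} as in the proof of Theorem 2.8 gives $x=\gamma_{\mu}(e)$ for some $e$ in the centralizer of $\mu$ with $\alpha(e)=\chi e$; since $e^{\ast}e$ and $ee^{\ast}$ are $\alpha$-invariant, ergodicity forces them to be scalar, and after rescaling $e$ is unitary.

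As $\alpha$ is a $\ast$-automorphism, $\chi\cdot\sigma(e)=\sigma(e)$; a finite spectrum would force $\chi$ to permute $\sigma(e)$ and hence have finite order, a contradiction, so $\sigma(e)$ is infinite. Let $E$ be the projection-valued spectral measure of $e$; its projections all lie in the centralizer of $\mu$, and faithfulness of $\mu$ ensures that the scalar measure $\mu_{e}(B):=\mu(E(B))$ on $\mathbb{T}$ has $\mathrm{supp}(\mu_{e})=\sigma(e)$. Given $k>0$, choose a small arc $I\subset\mathbb{T}$ with $0<\mu_{e}(I)<1/k$ whose two endpoints are not atoms of $\mu_{e}$: this is possible because $\mathrm{supp}(\mu_{e})$ is infinite while $\mu_{e}$ has at most countably many atoms (either shrink arcs around a non-atom point of support, or pick a sufficiently small-mass atom and avoid the others). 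Set $P:=E(I)$, a projection in the centralizer with $0<\mu(P)<1/k$. From $\alpha_{n}(e)=\chi^{n}e$ one finds $\alpha_{n}(P)=E(\chi^{-n}I)$, so $P$ and $\alpha_{n}(P)$ commute as spectral projections of the normal operator $e$, and
\[
\mu(\alpha_{n}(P)P)=\mu_{e}(I\cap \chi^{-n}I).
\]
Choose $n_{j}\to\infty$ with $\chi^{n_{j}}\to 1$ (available by density of $\{\chi^{n}\}$). Then $I\triangle \chi^{-n_{j}}I$ lies in two arcs near the endpoints of $I$ of length $|1-\chi^{-n_{j}}|\to 0$, and since those endpoints are not atoms, $\mu_{e}(I\triangle \chi^{-n_{j}}I)\to 0$, so $\mu(\alpha_{n_{j}}(P)P)\to\mu(P)$. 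Together with $\mu(P)<1/k$ this yields $\limsup_{n}\mu(\alpha_{n}(P)P)\geq \mu(P)>k\mu(P)^{2}$, which is (5.2).

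Finally, (5.2) and (5.3) are equivalent by a short computation exploiting the Tomita--Takesaki identity $J\pi(P)J\Omega=\pi(P)\Omega$, valid because $P$ is a self-adjoint projection in the centralizer of $\mu$: one obtains $\Delta_{n}(P\otimes J\pi(P)J)=\mu(\alpha_{n}(P)P)$ and $(\mu\odot\tilde{\mu})(P\otimes J\pi(P)J)=\mu(P)^{2}$. The delicate points in this plan are securing the eigenoperator $e$ as a unitary in the centralizer (this rests on \cite[Theorem 2.5]{S}) and the identification $\mathrm{supp}(\mu_{e})=\sigma(e)$ via faithfulness of $\mu$; the remainder reduces to elementary spectral theory combined with the density of $\{\chi^{n}\}$ in $\mathbb{T}$.
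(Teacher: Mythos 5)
Your construction is essentially the paper's: a unitary eigenoperator $u$ with eigenvalue $\chi$ that is not a root of unity (forced by ergodicity of every $\alpha_{n}$), spectral projections of $u$ over small arcs, and returns of $\chi^{n}$ near $1$ to make $\mu(\alpha_{n}(P)P)$ come back close to $\mu(P)$. One genuine (small) difference is in your favour: the paper first proves $E(\{v\})=0$ for \emph{every} $v\in\sigma(u)$ via the mutual orthogonality of the translates $E(\{\chi^{-n}v\})$, and then runs a nested-interval contradiction to produce arcs of arbitrarily small positive measure; you instead note that $\sigma(u)=\operatorname{supp}(\mu_{e})$ is infinite while $\mu_{e}$ has at most countably many atoms, and pick a small arc with non-atomic endpoints directly. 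Both routes are elementary, but yours bypasses (5.5); just spell out the case split (a non-atomic support point versus infinitely many atoms of summable mass), since an infinite support consisting entirely of atoms is possible.

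The one place where you may have a gap is the modular invariance. You attribute to \cite[Theorem 2.5]{S} the fact that the eigenoperator $e$ lies in the centralizer of $\mu$; this is exactly what makes $P$ fixed by the modular automorphism group and yields $J\pi(P)J\Omega=\pi(P)\Omega$, hence the equivalence of (5.2) and (5.3). The paper does not extract this from St\o rmer's theorem: it uses that theorem only to get $u\in A$ with $\alpha(u)=\chi u$, and devotes a separate argument (part (ii) of its proof, credited to the referee) to showing $\sigma_{t}(u)=u$. The argument is: $\alpha\circ\sigma_{t}=\sigma_{t}\circ\alpha$ implies $\sigma_{t}(u)$ is again an eigenoperator for $\chi$, so $\sigma_{t}(u)=\lambda_{t}u$ by simplicity of eigenvalues \cite[Lemma 2.1(3)]{S}; the group law and continuity give $\lambda_{t}=e^{i\theta t}$; and $Ju^{\ast}\Omega=\Delta^{1/2}u\Omega=e^{\theta/2}u\Omega$ forces $\theta=0$ because $J$ is antiunitary. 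If St\o rmer's Theorem 2.5 really asserts centralizer membership in this setting you are done; if it only gives $e\in A$ with $\alpha(e)=\chi e$, you need the above argument to justify both the clause ``left fixed by the modular automorphism group'' and your Tomita--Takesaki computation. Everything else in the proposal is sound.
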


\begin{proof}

The proof is divided into two parts. Part (i) proves the existence
of a projection $P\in A$ such that $0 < \mu (P) < 1/k$,
$P\alpha_n(P) = \alpha_n(P)P$ for all $n$, and (5.2) is satisfied.
Part (ii), for which I am indebted to the referee, proves that the
construction in (i) yields a projection $P$ left fixed by the
modular automorphism group associated with $\mu$, and that this
invariance ensures the equivalence of (5.2) and (5.3).

(i) Using the notation in Definition 2.5, but denoting $U_{1}$
simply as $U$ for simplicity, and correspondingly $\alpha_{1}$ as
$\alpha$, it follows from the fact that $\mathbf{A}$ is ergodic but
not weakly mixing that $U$ has an eigenvalue
$\chi\in\mathbb{C}\backslash\{1\}$ with corresponding eigenvector of
the form $u\Omega$ for some $u\in A$ which means (see \cite[Theorem
2.5] {S}) that $\alpha(u)=\chi u$, where for simplicity of notation
we have identified $A$ with $\pi(A)$ and hence making $\pi$ in
Definition 2.5 the identity mapping $A\rightarrow A$ (we can do this
since $\mu$ is faithful).

Without loss we can assume that $u$ is unitary. Namely
$\alpha(u^{\ast} u)=\bar{\chi}\chi u^{\ast}u=u^{\ast}u$, so
$u^{\ast}u\in\mathbb{C}1$, since $\mathbf{A}$ is ergodic. It follows
that $u^*u=\|u^*u\|1=\|u\|^2 1$, since $u^*u\ge 0$. Since $u\neq0$,
we can assume that $u^{\ast}u=1$ by renaming $u/\left\|  u\right\|
$ as $u$. In the same way ergodicity and this normalization
procedure gives $uu^{\ast}=1$.

Note that $u\notin\mathbb{C}1$, since $\chi\neq1$. Since
$\alpha^{n}(u)=\chi^{n}u$ while $\left(A,\mu,\alpha^{n}\right)$ is
ergodic, it follows that $\chi^{n}\neq1$ for all $n\in\mathbb{Z}$.

Denote the spectrum of $u$ by $\sigma(u)$ and let $E$ be the
spectral measure relative to $\left(  \sigma(u),H\right)  $ with
\[
u=\int\iota dE
\]
where $\iota:\sigma(u)\rightarrow\sigma(u)$ denotes the identity map
(consult \cite[Section 2.5]{M} for a clear exposition of the
spectral theory that we are using here). Note that from the
definition of the spectrum of an element it follows that
$\sigma\left(\alpha^{n}(u)\right)  =\sigma(u)$, hence for
$v\in\sigma(u)$ we have $\chi^{n}v\in\sigma(u)$. But
$\chi^{m}v\neq\chi^{n}v$ and hence
\[
E\left(  \{\chi^{m}v\}\right)  E\left(\{\chi^{n}v\}\right)
=E\left(\{\chi^{m}v\}\cap\{\chi^{n}v\}\right) =0
\]
for any integers $m\neq n$. Setting
\[
\tilde{\chi}:\sigma(u)\rightarrow\sigma(u):v\mapsto\chi v
\]
and defining spectral measures $F:=\alpha\circ E$ and
$D:=E\circ\tilde{\chi }^{-1}$ relative to $\left( \sigma(u),H\right)
$, one can verify that $\int\iota dF=\chi u=\int\iota dD$ and hence
by uniqueness of the spectral measure we have $\alpha\circ
E=E\circ\tilde{\chi}^{-1}$ and more generally
\begin{equation}
\alpha^{n}\circ E=E\circ\tilde{\chi}^{-n}\tag{5.4}
\end{equation}
for all $n\in\mathbb{Z}$. Putting all this together we find that
\[
\alpha^{m}\left(E\left(\{v\}\right)\right)\alpha^{n}\left(E\left(\{v\}\right)\right)
=0
\]
for all integers $m\neq n$, hence
$P_{n}:=\alpha^{1}\left(E\left(\{v\}\right)\right)
+...+\alpha^{n}\left(E\left(\{v\}\right)\right)$ is a projection and
so $0\leq n\mu(\left(E\left( \{v\}\right)\right) \leq1$ for every
$n\in\mathbb{N}$, which means
\begin{equation}
E\left(  \{v\}\right)  =0\tag{5.5}
\end{equation}
for all $v\in\sigma(u)$.

In the remainder of the proof, for any set $V$ in the unit circle we
will simply write $E(V)$ instead of $E\left(  V\cap\sigma(u)\right)
$, and we will also use the notation
$P_{(\theta_{1},\theta_{2}]}:=E\left(  e^{i(\theta
_{1},\theta_{2}]}\right)  $ for any interval
$(\theta_{1},\theta_{2}]$. Consider
$-\pi<\theta_{1}<\theta_{2}\leq\pi$. By (5.4)
$\alpha^{n}\left(P_{(\theta_{1},\theta_{2}]}\right)
=P_{(\theta_{1}+\text{Arg}\chi^{-n}
,\theta_{2}+\text{Arg}\chi^{-n}]}$. But for any $\varepsilon>0$
there are arbitrarily large values of $n$ such that $\left|
\text{Arg}\chi^{-n}\right| <\varepsilon$ and hence such that
$\alpha^{n}\left(  P_{(\theta_{1},\theta _{2}]}\right)
P_{(\theta_{1},\theta_{2}]}\geq P_{(\theta_{1}+\varepsilon
,\theta_{2}-\varepsilon]}$. Furthermore, since $\mu$ is normal while
$\left\langle \Omega,E(\cdot)\Omega\right\rangle $ is a usual
positive measure, one can show that
$\lim_{n\rightarrow\infty}\mu\left(  P_{(\theta
_{1},\theta_{1}+1/n]}\right)  =0$, and by also employing (5.5) one
similarly finds $\lim_{n\rightarrow\infty}\mu\left(
P_{(\theta_{2}-1/n,\theta_{2} ]}\right)  =0$. Combining this with
the fact that $P_{(\theta_{1},\theta_{2}
]}-P_{(\theta_{1}+\varepsilon,\theta_{2}-\varepsilon]}=P_{(\theta_{1}
,\theta_{1}+\varepsilon]}+P_{(\theta_{2}-\varepsilon,\theta_{2}]}$
it follows that for any $\varepsilon^{\prime}$ we can choose
$\varepsilon$ small enough that $\mu\left(
P_{(\theta_{1},\theta_{2}]}-P_{(\theta_{1}+\varepsilon
,\theta_{2}-\varepsilon]}\right)  <\varepsilon^{\prime}$ and
therefore there are arbitrarily large values of $n$ such that
\[
\mu\left(\alpha^{n}\left(P_{(\theta_{1},\theta_{2}]}\right)
P_{(\theta_{1},\theta_{2}]}\right) >\mu\left(
P_{(\theta_{1},\theta_{2} ]}\right)  -\varepsilon^{\prime}.
\]

Now suppose that there is a $\delta>0$ such that
$\mu\left(P_{(\theta _{1},\theta_{2}]}\right)  =0$ or
$\mu\left(P_{(\theta_{1},\theta_{2} ]}\right)  \geq\delta$ for all
$-\pi<\theta_{1}<\theta_{2}\leq\pi$. With
$V(m,r):=(-\pi+2\pi(r-1)/m,-\pi+2\pi r/m]$ we have
$\sum_{r=1}^{m}\mu\left( P_{V(m,r)}\right)  =1$, hence each
\[
\mathcal{I}_{m}:=\left\{  V(m,r):\mu\left( P_{V(m,r)}\right)
\geq\delta ,r\in\{1,...,m\}\right\}
\]
contains at least one element, and we have a sequence of intervals
$I_{m} \in\mathcal{I}_{2^{m}}$ with $I_{m+1}\subset I_{m}$. But then
$\left\langle \Omega,E\left(  \bigcap_{m=1}^{\infty}I_{m}\right)
\Omega\right\rangle \geq\delta$ contradicting (5.5). We conclude
that for any $k^{\prime}>k>0$ there are
$-\pi<\theta_{1}<\theta_{2}\leq\pi$ such that
$0<\mu\left(P_{(\theta_{1},\theta_{2}]}\right) <1/k^{\prime}$. With
$P:=P_{(\theta _{1},\theta_{2}]}$ we have
$P\alpha^{n}(P)=\alpha^{n}(P)P$ from (5.4), completing part (i) of
the proof.

(ii) We continue with the notation in (i).

By \cite[Corollary VIII.1.4]{T} $\alpha\circ\sigma_{t}=\sigma_{t}
\circ\alpha$, where $t\mapsto\sigma_{t}$ is the modular automorphism
group associated with $\mu$. With $u$ and $\chi$ as before, it
follows that $\alpha(\sigma_{t}(u))=\chi\sigma_{t}(u)$. Together
with $\alpha(u)=\chi u$, this implies that
\[
\sigma_{t}(u)=\lambda_{t}u
\]
for some $\lambda_{t}\in\mathbb{C}$, according to \cite[Lemma
2.1(3)]{S}, for every $t\in\mathbb{R}$. Note that $\left|
\lambda_{t}\right|  =1$. From the group property of $\sigma_{t}$ it
is easily verified that $\lambda _{s+t}=\lambda_{s}\lambda_{t}$.
Since $t\mapsto\left\langle x,\sigma _{t}(u)y\right\rangle
=\left\langle \Delta^{-it}x,u\Delta^{-it}y\right\rangle $ is
continuous for all $x,y\in H$, where $\Delta$ is the modular
operator associated with $\left(A,\Omega\right)$, it follows that
$t\mapsto \lambda_{t}$ is continuous. Therefore
\[
\lambda_{t}=e^{i\theta t}%
\]
for all $t\in\mathbb{R}$ for some $\theta\in\mathbb{R}$; see for
example \cite[p. 12]{R}. It follows that
$\Delta^{it}u\Omega=\sigma_{t}(u)\Omega =e^{i\theta t}u\Omega$,
hence by the definition of $J\Delta^{1/2}$ (see for example
\cite[Section 2.5.2]{BR})
\[
Ju^{\ast}\Omega=J\left(  J\Delta^{1/2}\right)  u\Omega=\Delta^{1/2}
u\Omega=e^{\theta/2}u\Omega
\]
and by taking the norm both sides we conclude that $e^{\theta/2}=1$
and therefore $\theta=0$. This proves that
\[
\sigma_{t}(u)=u
\]
for all $t\in\mathbb{R}$.

Note that the fixed point algebra of the modular automorphism group
is itself a von Neumann algebra (as is the fixed point algebra of
any system) and since $u$ is in this fixed point algebra as shown
above, it follows that its spectral projections are too. In
particular
\[
\sigma_{t}(P)=P
\]
for all $t\in\mathbb{R}$. This means that
$\Delta^{it}P\Omega=P\Delta ^{it}\Omega=P\Omega\,$\ and therefore
\[
JP\Omega=J\left(  J\Delta^{1/2}\right)  P^{\ast}\Omega=\Delta^{1/2}
P\Omega=P\Omega
\]
so
\begin{align*}
\Delta_{n}\left(  P\otimes(JPJ)\right)
&=\mu_{\triangle}\left(\alpha
^{n}(P)\otimes(JPJ)\right) =\left\langle
\Omega,\alpha^{n}(P)JPJ\Omega\right\rangle
=\left\langle \Omega,\alpha^{n}(P)P\Omega\right\rangle \\
& =\mu\left(  \alpha^{n}(P)P\right)  .
\end{align*}
Furthermore
\[
\mu\odot\tilde{\mu}\left(  P\otimes(JPJ)\right)  =\mu(P)\left\langle
\Omega,JPJ\Omega\right\rangle =\mu(P)^{2}.
\]
The equivalence of (5.2) and (5.3) now follows.
\end{proof}

Now we are in a position to state and prove our version of
Ornstein's criterion. In its proof we encounter a C*-dynamical
system with invariant state, i.e. a $\left(C,\rho,\tau\right)$ with
$\left(C,\tau\right)$ as in Definition 4.3 and where $\rho$ is any
state on $C$ with $\rho\circ\tau_{g}=\rho$ for all $g\in G$ (with
$G=\mathbb{Z}$ the relevant case). We will refer to such a
$\left(C,\rho,\tau\right)$ as a \textit{C*-dynamical system} as
well. For such a C*-dynamical system \textit{weak mixing} is defined
in the same way as for W*-dynamical systems in Definition 2.5, but
we call it \textit{ergodic} if the fixed point space of the unitary
representation of $\tau$ on the Hilbert space $H$ of the GNS
construction of $\left(  C,\rho\right)  $ is one dimensional, i.e.
$\dim\left\{x\in H:U_{g}x=x\text{ for all }g\in G\right\} =1$.

\begin{theorem}
Let $\mathbf{A}$ be a system such that $\left(
A,\mu,\alpha_{n}\right)  $ is ergodic for every $n\in\mathbb{N}$.
(Alternatively we could assume that $\mathbf{A}$ is weakly mixing.)
Furthermore, assume that there is a real number $k>0$ such that
\[
\limsup_{n\rightarrow\infty}\Delta_{n}(c^{\ast}c)\leq
k\mu\odot\tilde{\mu }(c^{\ast}c)
\]
for all $c\in A\odot\tilde{A}$. Then $\mathbf{A}$ is strongly
mixing.
\end{theorem}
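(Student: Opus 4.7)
The plan is a contradiction argument combining Proposition 5.1, Lemma 5.2, and a Radon--Nikodym argument on the GNS representation of $\nu:=\mu\odot\tilde{\mu}$. Suppose $\mathbf{A}$ is not strongly mixing. By Proposition 5.1 there exist $a_{0}\in A$, $b_{0}\in\tilde{A}$, $\varepsilon>0$ and $n_{j}\to\infty$ with $|\Delta_{n_{j}}(a_{0}\otimes b_{0})-\mu(a_{0})\tilde{\mu}(b_{0})|\geq\varepsilon$. First, I would show $\mathbf{A}$ must be weakly mixing. If not, then because every $(A,\mu,\alpha_{n})$ is ergodic, Lemma 5.2 applied to the constant $k$ in the hypothesis yields a projection $P\in A$ with
\[
\limsup_{n\rightarrow\infty}\Delta_{n}(P\otimes JPJ)>k\,\nu(P\otimes JPJ).
\]
Since $P$ and $JPJ$ are commuting projections, $c:=P\otimes JPJ\in A\odot\tilde{A}$ satisfies $c^{\ast}c=c$, so this contradicts the hypothesis $\limsup\Delta_{n}(c^{\ast}c)\leq k\nu(c^{\ast}c)$ directly.

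So $\mathbf{A}$ is weakly mixing. Using Proposition 4.1, each $\Delta_{n_{j}}$ extends to a state on $A\otimes_{m}\tilde{A}$, and weak* compactness lets us pass to a subsequence with $\Delta_{n_{j}}\rightarrow\omega$ weak*. Each $\Delta_{n}$ is a joining, hence invariant under $\alpha\otimes_{m}\tilde\alpha$ and with the correct marginals, and these properties pass to the cluster point $\omega$. The hypothesis gives $\omega(c^{\ast}c)\leq k\,\nu(c^{\ast}c)$ for $c\in A\odot\tilde{A}$, and by continuity $\omega\leq k\nu$ on the positive cone of $A\otimes_{m}\tilde{A}$, while $\omega\neq\nu$ since $|\omega(a_{0}\otimes b_{0})-\mu(a_{0})\tilde\mu(b_{0})|\geq\varepsilon$.

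It remains to derive $\omega=\nu$ from these three facts. I would realise the GNS of $\nu$ concretely as $(H\otimes H,\pi_{\nu},\Omega\otimes\Omega)$ with $\pi_{\nu}(a\otimes b)=a\otimes b$ (using that $A$ and $\tilde{A}$ commute on $H$), for which the unitary implementing $\alpha\otimes_{m}\tilde\alpha$ is $V=U\otimes U$ fixing $\Omega\otimes\Omega$. The noncommutative Radon--Nikodym theorem then yields $T\in\pi_{\nu}(A\otimes_{m}\tilde{A})^{\prime}$ with $0\leq T\leq k\cdot I$ such that $\omega(x)=\langle\Omega\otimes\Omega,T\pi_{\nu}(x)(\Omega\otimes\Omega)\rangle$ for all $x$, and a short calculation from invariance shows that $T(\Omega\otimes\Omega)$ is $V$-fixed. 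The key input is then that for weakly mixing $\mathbf{A}$ the spectral measure of $U$ on $H\ominus\mathbb{C}\Omega$ is non-atomic, so that the fixed space of $U\otimes U$ on $H\otimes H$ equals $\mathbb{C}(\Omega\otimes\Omega)$; hence $T(\Omega\otimes\Omega)=\Omega\otimes\Omega$ after using $\omega(1)=1$, and since $T$ commutes with $\pi_{\nu}$ we conclude $\omega=\nu$, contradicting $\omega\neq\nu$. The main obstacle is this last step: correctly identifying the GNS of $\nu$ on the maximal tensor product and pushing the Radon--Nikodym / product-ergodicity argument through at the C*-algebraic level.
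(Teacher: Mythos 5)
Your proposal is correct, and it follows the paper's skeleton up to a point: the reduction to weak mixing via Lemma 5.2 with $c=P\otimes(J\pi(P)J)$ (noting $c^{\ast}c=c$) is exactly the paper's first step, and the passage to a weak* cluster point $\omega$ of $(\Delta_{n})$ in $J(\mathbf{A},\tilde{\mathbf{A}})$ with $\omega\leq k\,\mu\otimes_{m}\tilde{\mu}$ is also the same (one small caution: on a non-separable C*-algebra you should speak of cluster points or subnets rather than subsequences, as the paper does; this does not affect the argument). Where you genuinely diverge is in showing that a dominated invariant state must equal $\mu\otimes_{m}\tilde{\mu}$. The paper proves that the C*-dynamical system $\left(A\otimes_{m}\tilde{A},\mu\otimes_{m}\tilde{\mu},\alpha\otimes_{m}\tilde{\alpha}\right)$ is weakly mixing by a Ces\`{a}ro-average estimate (using the characterization of weak mixing from \cite{NSZ} and \cite{D2}), deduces that $\mu\otimes_{m}\tilde{\mu}$ is an extreme invariant state via \cite[Theorem 4.3.17]{BR}, and then finishes with a purely convex-geometric observation: an invariant state $\omega_{1}\leq k\omega_{0}$ with $\omega_{0}$ extremal forces $\omega_{1}=\omega_{0}$. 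You instead go straight to the Hilbert space: realize the GNS representation of $\mu\otimes_{m}\tilde{\mu}$ on $H\otimes H$ with implementing unitaries $U_{n}\otimes U_{n}$, apply the commutant Radon--Nikodym theorem to $\omega\leq k\nu$, show the derivative $T$ is invariant and hence $T(\Omega\otimes\Omega)$ lies in the fixed space of $U\otimes U$, and use the spectral fact that weak mixing makes this fixed space one-dimensional. The two routes rest on the same underlying fact (one-dimensionality of the fixed space of $U\otimes U$), but yours reaches it by elementary spectral theory of non-atomic measures on the circle rather than by the Ces\`{a}ro computation, and it absorbs the constant $k$ into the Radon--Nikodym operator rather than into a convexity argument; the paper's version has the advantage of isolating a reusable statement (weak mixing of $\mathbf{A}\otimes_{m}\tilde{\mathbf{A}}$ and extremality of the product state), while yours is more self-contained at the Hilbert-space level. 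Both are valid proofs.
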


\begin{proof}
Note that $\mathbf{A}$ is weakly mixing, for if it was not, our
assumptions would contradict Lemma 5.2 for $c=P\otimes(J\pi(P)J)$.
In the rest of the proof we only need weak mixing of $\mathbf{A}$,
rather than the ergodicity of $\left( A,\mu,\alpha_{n}\right)$ for
all $n\in\mathbb{N}$.

We now follow the basic argument presented in \cite[Theorem
4.3]{dlR} for the measure theoretic case, and we work in the setting
of the maximal C*-algebraic tensor product as explained at the
beginning of Section 4. Since $J(\mathbf{A},\mathbf{\tilde{A}})$ is
weakly* compact by Proposition 4.2, the sequence $\left(
\Delta_{n}\right)  $ has a cluster point $\omega$ in
$J(\mathbf{A},\mathbf{\tilde{A}})$ in the weak* topology. From our
assumptions it follows that $\omega\leq k\mu\otimes_{m}\tilde{\mu}$.

Note that $\mathbf{\tilde{A}}$ is weakly mixing, since $\mathbf{A}$
is. Also recall that a C*-dynamical system
$\left(C,\rho,\tau\right)$ for an action of $\mathbb{Z}$ is weakly
mixing if and only if
\[
\lim_{N\rightarrow\infty}\frac{1}{N}\sum_{n=1}^{N}\left|  \rho(a\tau
_{n}(b))-\rho(a)\rho(b)\right|  =0
\]
for all $a,b\in C$; see for example \cite[Proposition 5.4]{NSZ} or
\cite[Definition 2.3 and Proposition 3.4]{D2}, and keep in mind that
$\left( \{1,...,N\}\right)  _{N}$ is a F\o lner sequence in
$\mathbb{Z}$. We now use this characterization of weak mixing to
show that the C*-dynamical system
$\mathbf{A\otimes}_{m}\mathbf{\tilde{A}}:=\left( A\mathbf{\otimes}
_{m}\tilde{A},\mu\mathbf{\otimes}_{m}\tilde{\mu},\alpha\mathbf{\otimes}
_{m}\tilde{\alpha}\right)  $ is weakly mixing.\ It will be
convenient to write $\rho:=\mu\mathbf{\otimes}_{m}\tilde{\mu}$ and
$\tau:=\alpha\mathbf{\otimes }_{m}\tilde{\alpha}$. For any
\[
c=\sum_{j=1}^{m}a_{j}\otimes c_{j}\in A\odot\tilde{A}
\]
and
\[
d=\sum_{j=1} ^{m}b_{j}\otimes d_{j}\in A\odot\tilde{A}
\]
we have
\begin{align*}
&  \left|  \rho(c\tau_{n}(d))-\rho(c)\rho(d)\right| \\
&  \leq\sum_{j=1}^{m}\sum_{k=1}^{m}\left|  \mu\left( a_{j}\alpha_{n}
(b_{k})\right)  \tilde{\mu}\left(
c_{j}\tilde{\alpha}_{n}(d_{k})\right) -\mu\left(
a_{j}\alpha_{n}(b_{k})\right)  \tilde{\mu}(c_{j})\tilde{\mu}
(d_{k})\right| \\
&  +\sum_{j=1}^{m}\sum_{k=1}^{m}\left|  \mu\left(  a_{j}\alpha_{n}
(b_{k})\right)  \tilde{\mu}(c_{j})\tilde{\mu}(d_{k})-\mu(a_{j})\mu
(b_{k})\tilde{\mu}(c_{j})\tilde{\mu}(d_{k})\right| \\
&  \leq\sum_{j=1}^{m}\sum_{k=1}^{m}\left\|  a_{j}\right\|  \left\|
b_{k}\right\|  \left|  \tilde{\mu}\left(  c_{j}\tilde{\alpha}_{n}
(d_{k})\right)  -\tilde{\mu}(c_{j})\tilde{\mu}(d_{k})\right| \\
&  +\sum_{j=1}^{m}\sum_{k=1}^{m}\left\|  c_{j}\right\|  \left\|
d_{k}\right\|  \left|  \mu\left(  a_{j}\alpha_{n}(b_{k})\right)
-\mu (a_{j})\mu(b_{k})\right|
\end{align*}
therefore $\lim_{N\rightarrow\infty}\frac{1}{N}\sum_{n=1}^{N}\left|
\rho(c\tau_{n}(d))-\rho(c)\rho(d)\right|  =0$, since $\mathbf{A}$
and $\mathbf{\tilde{A}}$ are both weakly mixing. Now consider
arbitrary $a,b\in A\mathbf{\otimes}_{m}\tilde{A}$ and any
$\varepsilon>0$. Then there are $c,d\in A\odot\tilde{A}$ such that
in the maximal C*-norm $\left\| a-c\right\|  _{m}<\varepsilon$ and
$\left\|  b-d\right\|  _{m}<\varepsilon$, so
\begin{align*}
&  \left|  \rho\left(  a\tau_{n}(b)\right)  -\rho(a)\rho(b)\right| \\
&  \leq\left|  \rho\left(  c\tau_{n}(d)\right)  -\rho(c)\rho(d)\right| \\
&  +\left|  \rho\left(  (a-c)\tau_{n}(b)\right)  \right|  +\left|
\rho\left( c\tau_{n}(b-d)\right)  \right|  +\left|
\rho(c-a)\rho(b)\right|  +\left|
\rho(c)\rho(d-b)\right| \\
&  \leq\left|  \rho\left(  c\tau_{n}(d)\right)
-\rho(c)\rho(d)\right| +2\varepsilon\left\|  b\right\|
_{m}+2\varepsilon\left(  \left\|  a\right\| _{m}+\varepsilon\right)
.
\end{align*}
From all this it follows that
$\lim_{N\rightarrow\infty}\frac{1}{N}\sum _{n=1}^{N}\left|
\rho(a\tau_{n}(b))-\rho(a)\rho(b)\right|  =0$, i.e.
$\mathbf{A\otimes}_{m}\mathbf{\tilde{A}}$ is weakly mixing.

From the definitions of weak mixing and ergodicity of a C*-dynamical
system, it follows that $\mathbf{A\otimes}_{m}\mathbf{\tilde{A}}$ is
ergodic and therefore $\mu\mathbf{\otimes}_{m}\tilde{\mu}\in
E_{\alpha\mathbf{\otimes} _{m}\tilde{\alpha}}$ by \cite[Theorem
4.3.17]{BR} and Definition 4.3. However, if $\omega_{1}\leq
k\omega_{0}$ where $\omega_{0}\in
E_{\alpha\mathbf{\otimes}_{m}\tilde{\alpha}}$ while $\omega_{1}$ is
an invariant state on $A\mathbf{\otimes} _{m}\tilde{A}$ under
$\alpha\otimes_{m}\tilde{\alpha}$, then $\omega _{1}=\omega_{0}$,
since if this was not the case, then $k>1$ so $\omega
_{2}:=(k\omega_{0}-\omega_{1})/(k-1)$ is an invariant state which
gives $\omega_{0}=\omega_{1}/k+(k-1)\omega_{2}/k$ contradicting
$\omega_{0}\in E_{\alpha\mathbf{\otimes}_{m}\tilde{\alpha}}$. So
$\omega=\mu\otimes_{m}\tilde{\mu}$ which means that $\mu
\otimes_{m}\tilde{\mu}$ is the unique cluster point of $\left(\Delta
_{n}\right)$ in the weak* topology. Hence
\[
\operatorname{w*-lim}_{n\rightarrow\infty}\Delta_{n}=\mu\otimes_{m}\tilde{\mu}
\]
in $J(\mathbf{A},\mathbf{\tilde{A}})$. Therefore $\mathbf{A}$ is
strongly mixing by Proposition 5.1.
\end{proof}

Note that the following partial converse is of course also true,
namely if $\mathbf{A}$ is strongly mixing, then it is weakly mixing
and $\operatorname{w*-lim}_{n\rightarrow\infty}\Delta
_{n}=\mu\otimes_{m}\tilde{\mu}$.

\section*{Acknowledgements}
I thank Conrad Beyers and Anton Str\"{o}h for useful conversations.
I also thank the referee for a careful reading of the manuscript and
several constructive suggestions which improved the paper, in
particular in Sections 4 and 5. This work was partially supported by
the National Research Foundation of South Africa.

\end{document}